\def\ch{\mathfrak h^*}
\def\lr#1{[\,#1\,]}
\def\dlr#1{\langle #1 \rangle}
\renewcommand{\mod}[1]{{\rm mod}\,#1}
\def\ch{{\rm char}}
\newtheorem{thm}{Theorem}[section]
\newtheorem{lem}[thm]{Lemma}
\newtheorem{prop}[thm]{Proposition}
\newtheorem{cor}[thm]{Corollary}
\newtheorem{defn}[thm]{Definition}
\newtheorem{exam}[thm]{Example}
\newtheorem{remark}[thm]{Remark}
\numberwithin{equation}{section}
\date{}
\begin{document}

\thispagestyle{empty}

\begin{center}
{\bf{ \LARGE  The Grothendieck ring of a class of 2$n^2$-dimension semisimple Hopf Algebras $H_{2n^2}$} \footnotetext { $\dag$
Corresponding author: slyang@bjut.edu.cn
%
}}

\bigbreak

\normalsize Jialei Chen$^{*}$,  Shilin Yang$^{*\dag},$ Dingguo Wang$^{\sharp}$

{\footnotesize\small\sl $^{*}$College of Applied Sciences,
Beijing University of Technology\\
\footnotesize\sl Beijing 100124, P. R.  China }

{\footnotesize\small\sl $^{\sharp}$School of Mathematical Sciences,
Qufu Normal University \\
\footnotesize\sl Qufu 273165, P. R.  China }
\end{center}
\begin{quote}
{\noindent\small{\bf Abstract.}
In this paper, we construct the Grothendieck ring of a class of 2$n^2$-dimension semisimple Hopf Algebras $H_{2n^2}$, which can be viewed as a generalization of the 8-dimension Kac-Paljutkin Hopf algebra $K_8$. All irreducible  $H_{2n^2}$-modules are classified. Furthermore, we
 describe the Grothendieck ring $r(H_{2n^2})$ by generators and relations explicitly.
\\
{\bf Keywords}:  Grothendieck ring, Hopf algebra, irreducible module.

\noindent {\bf Mathematics Subject Classification:}\quad 16G10, 16D70, 16T99.
}
\end{quote}

\section{Introduction}
In the 1960$^\prime$s, Kac and Paljutkin (see \cite{KP}) discovered a non-commutative and non-cocommutative semisimple Hopf algebra $K_8$ of dimension 8. Later, Masuoka (see \cite{MAS}) constructed this Hopf algebra as an extension of $k[C_2 \times C_2]$ by $k[C_2]$. Recently, using Ore extension(see \cite{BDG,Pan,Wl,WL1,WL2,WZZ,XWC}), an important method to constructing Hopf algebras, Pansera constructed an interesting class of semisimple Hopf algebras $H_{2n^2}$ in \cite{PAN}. These Hopf algebras $H_{2n^2}$ of dimension $2n^2$ are neither commutative nor cocommutative. In particular, the Hopf algebra $K_8$ is just the Hopf algebra $H_8$. Therefore, $H_{2n^2}$ can be viewed as a generalization of the Kac-Paljutkin 8-dimensional Hopf algebra $K_8$.

The representations of $H_8$ were studied in several papers such as \cite{AL} and \cite{SHI}. It turns out that the Hopf algebra $H_8$ has 4 one-dimensional representations and a single two-dimensional simple module. Recently, the weak Hopf algebra $\widetilde{H_8}$ corresponding to $H_8$ was constructed in \cite{SY}, the representation ring of $\widetilde{H_8}$ was described and the automorphism group of $r(\widetilde{H_8})$ was proved to isomorphic to the
dihedral group $D_6$ with order 12. 

As is well known, the tensor product of finite dimensional representations of a Hopf algebra plays an important role in the representation theory of Hopf algebras. Particularly, how to decompose a tensor product of two indecomposable modules into a direct sum of indecomposable representations has attracted numerous attentions. One method of addressing this problem is to take the tensor product as the multiplication of the Green ring (or the
representation ring) $r(H)$, and to study the ring properties of $r(H)$. In \cite{CIBILS}, Cibils classified the indecomposable modules over $k\mathbb{Z}_{n}(q)/I_{d}$, and gave the decomposition formulas of the tensor product of two indecomposable $k\mathbb{Z}_{n}(q)/I_{d}$-modules. Yang determined the representation type of a class of pointed Hopf algebras, classified all indecomposable modules of the simple pointed Hopf algebra $R(q,a)$, and gave the decomposition formulas of the tensor product of two indecomposable $R(q,a)$-modules(see \cite{YANG2}). It is noted that some results of $R(q,a)$ were recently extended to more general case of pointed Hopf algebras of rank one by Wang et al. (see \cite{ZLH}). Huang et al. computed the Clebsch-Gordan formulae and the Green rings of connected pointed tensor categories of finite type (see \cite{HOYZ}) and some tame hereditary pointed tensor categories which are not finite(see \cite{HY}). Li and Hu described the Green rings of the 2-rank Taft algebra(at $q=-1$)and its two relatives twisted by a representation theoretic analysis(see \cite{LH}). Chen, Van Oystaeyen and Zhang gave the Green rings of the Taft algebra $H_{n}(q)$ (see \cite{COZ}). Li and Zhang extended the results of \cite{COZ}, computed the Green rings of the Generalized Taft Hopf algebras $H_{n,d}$ by generators and generating relations, and determined all nilpotent elements in $r(H_{n,d})$ (see \cite{LZ}). Su and Yang (see \cite{SY1})
studied the Green rings of the weak Generalized Taft Hopf algebras $r(\mathfrak{w}^{s}(H_{n,d}))$, showing that the Green rings of the weak Generalized Taft Hopf algebras was much more complicated than its Grothendick ring.
Su and Yang (see \cite{SY2}) also characterized the representation ring of small quantum group $\bar{U}_q{(sl_2)}$
by generators and relations. It turns out that the representation ring of $\bar{U}_q{(sl_2)}$ is generated by infinitely many generators
subject to a family of generating relations. It is noted that most of the above results are obtained in the case of pointed Hopf algebras.

In this paper, we will study the Grothendieck ring of a class of 2$n^2$-dimension semisimple Hopf algebras $H_{2n^2}$, which is not pointed. All irreducible $H_{2n^2}$-modules are classified. Furthermore, we describe the Grothendieck ring $r(H_{2n^2})$ by generators and relations explicitly. It turns out that $r(H_{2n^2})$ is a commutative
ring generated by two elements subjecting to three relations for an odd number $n$, and three elements with five relations for an even number $n$.

The paper is organized as follows. In Section 1, we give the definition of $H_{2n^2}$  in \cite{PAN} by generators and relations. It is noted that $H_{2n^2}$ is a quasi-triangular Hopf algebra. A complete set of primitive central idempotents of $H_{2n^2}$ is constructed and its block decomposition is given. In Section 2, all the finite dimensional irreducible representations of $H_{2n^2}$ are classified and the decomposition formulas of the tensor product of two irreducible $H_{2n^2}$-modules are established. In Section 3, we describe the Grothendieck ring $r(H_{2n^2})$ by generators and relations explicitly. Finally we give some concrete examples for $n =2, 3, 4, \cdots, 8.$

Throughout this paper, we work over a fixed field $k$ containing an $n$-th primitive root $q$ of unity and $\ch k\nmid 2n^2$. For the theory of
Hopf algebras and quantum groups, we refer to \cite{KA,MA,MONT,SW}.

\section{The Hopf Algebras $H_{2n^2}$}
\label{sect-2}
In this section, let us recall the definition of the Hopf algebra $H_{2n^2}$ in \cite{PAN}.

Let $R$ be a Hopf algebra with the antipode $S$, $H = R[z; \sigma]$ be the Ore extension with the $\sigma-$derivation 0, where $\sigma$ is an automorphism of $R$ as an algebra. Suppose that
\begin{itemize}
  \item[(1)] $J\in R\otimes R$ such that $(\sigma, J)$ is a twisted homomorphism (the definition in detail see \cite{PAN}),
  \item[(2)] $\sigma \circ S = S \circ\sigma$ and $\sigma^2 = id$,
  \item[(3)] there exists $0 \neq t \in R$ such that (i) $\Delta(t)=J(\sigma\otimes \sigma)(J)(t \otimes t)$,  (ii) $t\sum\limits_{J}J^1S(J^2) = 1, $ (iii) $t\sum\limits_{J}\sigma(S(J^1)J^2)= 1,$
  where $J = \sum\limits_{J}J^1 \otimes J^2\in R\otimes R$.
\end{itemize}
Then $H/\langle z^2 - t\rangle$ is a finite dimensional Hopf algebra with the following structure
$$za=\sigma(a)z \hbox{ for all } a\in R, \ \Delta(z) = J(z \otimes z), \varepsilon(z)=1, \hbox{ and } S(z) = z.$$
%
%
%
%
In particular, if $R=k\dlr{x, y| x^n=y^n=1, xy=yx}$ is a group algebra, where $n > 1$. We take $q \in k$ to be a primitive $n$-th root of unity and $\sigma$ is an automorphism of $R$ as an algebra defined by $x^iy^s\rightarrow x^sy^i$, for $1 \leq i, s \leq n$.
We also take $$J:=\frac{1}{n}\sum_{i, j=0}^{n-1} q^{-ij} x^j\otimes y^i\in R\otimes R.$$ Then the pair $(\sigma, J)$ 
satisfies the above conditions (1)-(3). Therefore, we get a Hopf algebra $H_{2n^2}$ of dimension $2n^2$ as follows.

\begin{defn}{\rm (\cite{PAN})}\label{defn1-2} Let $n>1$ and $q$ be a primitive $n$-th root of unity. The Hopf algebra $H_{2n^2}$ is the associative algebra generated by $x, y$ and $z$, with the following relations
\begin{eqnarray*}
&&x^{n}=1, \ y^{n}=1,\\
&&xy=yx, \ zx=yz, \ zy=xz, \\
&&z^2=\frac{1}{n}\sum_{i,j=0}^{n-1}q^{-ij} x^iy^j.
\end{eqnarray*}
The co-multiplication, counit, and antipode are as follows:
\begin{eqnarray*}
  &&\Delta(x)=x\otimes x,\quad \epsilon(x)=1,\quad S(x)=x^{-1}\\
  &&\Delta(y)=y\otimes y,\quad \epsilon(y)=1,\quad S(y)=y^{-1}\\
  &&\Delta(z)=\frac{1}{n}\sum_{i,j=0}^{n-1}q^{-ij} x^iz\otimes y^jz, \quad\epsilon(z)=1,\quad S(z)=z.
\end{eqnarray*}
\end{defn}
One can check that
$$\left(\sum_{i=0}^{n-1} x^i\right)\left(\sum_{j=0}^{n-1} y^j\right)\left(1+z\right)$$
is the left and right integral of $H_{2n^2}$.
Therefore, it is easy to see that $H_{2n^2}$ is a non-commutative, non-cocommutative semisimple Hopf algebra with the
basis $$\{x^iy^j,\ x^iy^j z | 0\leq i, j\leq n-1\}.$$

For $0\leq j\leq n-1$, set
$$e_j=\frac{1}{n}\sum_{i=0}^{n-1}q^{-ij} x^i, \quad f_j=\frac{1}{n}\sum_{i=0}^{n-1}q^{-ij} y^i,$$
then $\{e_0,\ e_1, \cdots,e_{n-1}\}$ and $\{f_0,\ f_1, \cdots,f_{n-1}\}$ are orthogonal idempotents of $H_{2n^2}$ respectively.

Let $H$ be a finite dimensional Hopf algebra and $R\in H\otimes H$ an invertible element. The pair
$(H, R)$ is said to be a quasi-triangular Hopf algebra and $R$ is said to be a universal $R$-matrix of $H$, if the following three conditions are satisfied.
\begin{itemize}
  \item[(i)] $\Delta^{\prime}(h)=R\Delta(h) R^{-1},$ for all $h\in H$;
  \item[(ii)] $(\Delta\otimes id) (R)=R_{13}R_{23}$;
  \item[(iii)] $(id \otimes \Delta ) (R)=R_{13}R_{12}$;
\end{itemize}
Here $\Delta^{\prime}=T\circ\Delta, T: H\otimes H\to H\otimes H, T(a\otimes b)=b\otimes a$, and $R_{ij}\in H\otimes H\otimes H$ is given
by $R_{12}=R\otimes 1$, $R_{23}=1\otimes R$,  $R_{13}=(T\otimes id)(R_{23})$.

\begin{prop}\label{prop1-2} $H_{2n^2}$ is a quasi-triangular Hopf algebra.
\end{prop}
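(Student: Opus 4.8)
The plan is to exhibit an explicit universal $R$-matrix for $H_{2n^2}$ and verify the three axioms directly. The natural candidate comes from the structure of the Ore extension: the element $J=\frac{1}{n}\sum_{i,j=0}^{n-1}q^{-ij}x^j\otimes y^i$ already controls the comultiplication of $z$, and the grouplike part $k\langle x,y\rangle\cong k[C_n\times C_n]$ carries the standard triangular structure. So I would guess $R$ is built from a ``Cartan part'' of the form $\frac{1}{n}\sum_{a,b}q^{-ab}x^a\otimes y^b$ (equivalently $\sum_j e_j\otimes \tilde f_j$ for suitable idempotents), possibly combined with a term involving $z\otimes z$ so that axiom (i) forces $\Delta'(z)=R\Delta(z)R^{-1}$. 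The first step is therefore to write down this $R$ precisely, compute $R^{-1}$ (the Cartan part is essentially its own inverse up to relabelling, since $\frac1n\sum q^{-ab}x^a\otimes y^b$ acts on the $(e_i\otimes f_j)$-decomposition diagonally), and record the commutation relations $zx=yz$, $zy=xz$ that will be used throughout.

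Next I would check axiom (i), $\Delta'(h)=R\Delta(h)R^{-1}$, on the generators $x,y,z$. For $x$ and $y$ this is automatic because they are grouplike and $\Delta$ is already cocommutative on the commutative subalgebra $k[C_n\times C_n]$, so $R\Delta(x)R^{-1}=\Delta(x)=\Delta'(x)$ as long as $R\in k[C_n\times C_n]\otimes k[C_n\times C_n]$ commutes with $x\otimes x$ — which it does. The real content is the identity for $z$: one must show $R\,\Delta(z)\,R^{-1}=\Delta'(z)=T\Delta(z)=\frac1n\sum q^{-ij}y^jz\otimes x^iz$. This is where the explicit form of $\Delta(z)=J(z\otimes z)$ and the twisted-homomorphism property of $(\sigma,J)$ from \cite{PAN} enter; concretely, conjugating $z\otimes z$ by the Cartan part swaps the roles of the $x$- and $y$-legs because $z$ intertwines $x$ and $y$, and this swap is exactly what turns $J(z\otimes z)$ into its flip.

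Then I would verify the hexagon-type axioms (ii) and (iii). For the Cartan part, $(\Delta\otimes\mathrm{id})$ and $(\mathrm{id}\otimes\Delta)$ applied to $\frac1n\sum q^{-ab}x^a\otimes y^b$ split as $R_{13}R_{23}$ and $R_{13}R_{12}$ by the elementary identity $x^a\otimes x^a\otimes y^b \leftrightarrow$ product of two copies, using $q^{-a(b+b')}=q^{-ab}q^{-ab'}$ and the grouplikeness of $x,y$; this is the standard computation for the $R$-matrix of a finite abelian group. The extra care is needed if $R$ contains a $z\otimes z$ factor, since then $\Delta(z)$ reintroduces the sum over $i,j$ and one must check the cross terms cancel or recombine correctly, again leaning on conditions (1)--(3) in the construction of $H_{2n^2}$ and on $z^2=\frac1n\sum q^{-ij}x^iy^j$.

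I expect the main obstacle to be axiom (i) for the generator $z$: unwinding $R\,\Delta(z)\,R^{-1}$ requires carefully moving the Cartan part of $R$ past $z\otimes z$ using $zx=yz$ and $zy=xz$, and then recognizing the resulting double sum as the flipped coproduct. A secondary nuisance is bookkeeping of the normalization $\frac1n$ and the $q$-power exponents modulo $n$ when several sums are multiplied; choosing the idempotent notation $e_j,f_j$ introduced in the excerpt to rewrite $R$ as a sum of rank-one ``diagonal'' terms should keep this manageable. Once $R$ is correctly normalized, axioms (ii) and (iii) should reduce to routine character-sum identities.
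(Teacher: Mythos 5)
Your approach is the same as the paper's: exhibit an explicit ``Cartan-type'' universal $R$-matrix supported on $k[C_n\times C_n]\otimes k[C_n\times C_n]$ and check the three axioms directly, with the only nontrivial point being axiom (i) on the generator $z$. Two corrections to your guess, though. First, no $z\otimes z$ factor is needed. Second, your literal candidate $\frac1n\sum_{a,b}q^{-ab}x^a\otimes y^b$ is exactly $J=\sum_i e_i\otimes y^{i}$, and this does \emph{not} satisfy axiom (i) for $z$; the paper takes instead
$$R=\sum_{i=0}^{n-1}e_i\otimes y^{-i}=\frac1n\sum_{i,j=0}^{n-1}q^{-ij}x^j\otimes y^{-i},$$
which is $J^{-1}$ (since the $e_i$ are orthogonal idempotents summing to $1$, one has $JR=\sum_i e_i\otimes 1=1$). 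With this choice the check for $z$ collapses: $R\Delta(z)R^{-1}=RJ(z\otimes z)J=(z\otimes z)J=\sum_k f_kz\otimes x^kz=\Delta'(z)$, using $zx^j=y^jz$ and $zy^i=x^iz$ exactly as you anticipated; had you kept $y^{+i}$ the conjugation produces extra factors $y^{2i}x^{-k}$ in the second leg and the identity fails. Axioms (ii) and (iii) are then the routine character-sum computation you describe.
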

\begin{proof} Indeed, let
$$R=\sum_{i=0}^{n-1}e_i\otimes y^{-i}=\frac{1}{n}\sum_{i, j=0}^{n-1} q^{-ij} x^j\otimes y^{-i}.$$
It is easy to see that $J=R^{-1}$ and it is straightforward to check that
$R$ satisfies the above three conditions.
Therefore $H_{2n^2}$ is a quasi-triangular Hopf algebra.
\end{proof}
It is well known that $\{e_if_j, e_if_jz|0\leq i,j\leq n-1\}$ is also a basis of $H_{2n^2}$, and any element $a$ of $H_{2n^2}$ can be written as
$$a=\sum_{i,j=0}^{n-1}\big(a_{ij}e_if_j+b_{ij}e_if_jz\big).$$
Denote the center of $H_{2n^2}$ by $\mathbb{Z}(H_{2n^2})$. We have

\begin{lem}\label{lem2-3} An element
$$a=\sum_{i,j=0}^{n-1}\big(a_{ij}e_if_j+b_{ij}e_if_jz\big)\in \mathbb{Z}(H_{2n^2})$$
 if and only if $a_{ij}=a_{ji}$, and $b_{ij}=0$ for $i\neq j$.
\end{lem}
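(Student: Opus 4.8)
The plan is to use the standard fact that $a\in\mathbb{Z}(H_{2n^2})$ if and only if $a$ commutes with the algebra generators $x$, $y$ and $z$. So first I would record the elementary identities that turn the verification into bookkeeping. From $x^n=1$ and the definition of the idempotents one has $xe_i=q^ie_i=e_ix$ and $ye_i=e_iy$, and symmetrically $yf_j=q^jf_j=f_jy$ and $xf_j=f_jx$; from $zx=yz$, $zy=xz$ (equivalently $zx^k=y^kz$, $zy^k=x^kz$) one gets $ze_i=f_iz$ and $zf_j=e_jz$, hence $ze_if_j=e_jf_iz$; and from $z^2=\frac1n\sum_{i,j}q^{-ij}x^iy^j=\sum_i x^if_i$ together with $x^i=\sum_k q^{ik}e_k$ one gets $e_if_jz^2=q^{ij}e_if_j$. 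Each of these is a one-line computation.

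Next, writing $a=\sum_{i,j}(a_{ij}e_if_j+b_{ij}e_if_jz)$ and using that $\{e_if_j,\ e_if_jz\mid 0\le i,j\le n-1\}$ is a basis, I would compute $xa$ and $ax$ with these identities and find $xa-ax=\sum_{i,j}(q^i-q^j)b_{ij}e_if_jz$; since $q$ is a primitive $n$-th root of unity and $0\le i,j\le n-1$, comparing coefficients shows $xa=ax$ forces $b_{ij}=0$ whenever $i\neq j$. The analogous computation with $y$ yields exactly the same condition (consistent with $y=zxz^{-1}$, $z$ being invertible with $z^{-1}=z\,(\sum_k x^{-k}f_k)$), so it imposes nothing new. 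Then I would compute $az$ and $za$: using $ze_if_j=e_jf_iz$ and $e_if_jz^2=q^{ij}e_if_j$ and relabelling the summation, one gets $az-za=\sum_{i,j}\big[(a_{ij}-a_{ji})e_if_jz+q^{ij}(b_{ij}-b_{ji})e_if_j\big]$, so $az=za$ forces $a_{ij}=a_{ji}$ and $b_{ij}=b_{ji}$, the latter already contained in the previous condition. This gives the "only if" direction.

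For the converse I would simply note that these same three computations, read backwards, show that if $a_{ij}=a_{ji}$ and $b_{ij}=0$ for $i\neq j$, then $xa=ax$, $ya=ay$ and $za=az$, so $a$ commutes with all algebra generators and hence lies in $\mathbb{Z}(H_{2n^2})$. I do not anticipate a genuine obstacle here; the proof is essentially a calculation. The only points that need care are getting the crossing relations $ze_i=f_iz$, $zf_j=e_jz$ and the scalar by which $z^2$ acts on $e_if_j$ correct, and observing that $\{e_if_j,\ e_if_jz\}$ being a basis makes the coefficientwise comparisons legitimate.
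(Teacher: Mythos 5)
Your proposal is correct and follows essentially the same route as the paper: both check commutation with the generators $x$ and $z$ on the basis $\{e_if_j,\ e_if_jz\}$, using the crossing relations $ze_if_j=e_jf_iz$ and the scalar $q^{ij}$ by which $z^2$ acts on $e_if_j$, to extract $b_{ij}=0$ ($i\neq j$) from $x$ and $a_{ij}=a_{ji}$ from $z$. The only difference is cosmetic (order of the two computations), and your explicit remark that the $y$-relation is redundant and that the converse follows by reading the same identities backwards is, if anything, slightly more complete than the paper's treatment.
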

\begin{proof}
Assume that $a\in \mathbb{Z}(H_{2n^2})$, then $za=az$. Note that
\begin{eqnarray*}
za=z\sum_{i,j=0}^{n-1}\big(a_{ij}e_if_j+b_{ij}e_if_jz\big)
&=&\sum_{i,j=0}^{n-1}\big(a_{ij}e_jf_iz+b_{ij}e_jf_iz^2\big)\\
&=&\sum_{i,j=0}^{n-1}\big(a_{ij}e_jf_iz+b_{ij}e_jf_i\sum_{k=0}^{n-1}e_{k}y^{k}\big)\\
&=&\sum_{i,j=0}^{n-1}\big(a_{ij}e_jf_iz+b_{ij}q^{ij}e_jf_i\big).
\end{eqnarray*}
Similarly, we have
\begin{eqnarray*}
az
=\sum_{i,j=0}^{n-1}\big(a_{ij}e_if_jz+b_{ij}q^{ij}e_if_j\big).
\end{eqnarray*}
It follows that $a_{ij}=a_{ji}$ and $b_{ij}=b_{ji}$ for $i\neq j$.  On the one hand,
$$xa=x\sum_{i,j=0}^{n-1}\big(a_{ij}e_if_j+b_{ij}e_if_jz\big)
=\sum_{i,j=0}^{n-1}\big(a_{ij}q^{i}e_if_j+b_{ij}q^{i}e_if_jz\big),$$
$$ax=\sum_{i,j=0}^{n-1}\big(a_{ij}e_if_j+b_{ij}e_if_jz\big)x
=\sum_{i,j=0}^{n-1}\big(a_{ij}q^{i}e_if_j+b_{ij}q^{j}e_if_jz\big).$$
It follows that $b_{ij}=0$ for $i\neq j$ since $q^{i}\neq q^{j}$ when $i\neq j$.
On the other hand, if $b_{ij}=0$ if $i\ne j$, it is easy to see that $ay=ya$.

The proof is completed.
\end{proof}

\begin{prop}\label{prop2-4}  For the Hopf algebra $H_{2n^2}$, the set
$$\left\{\frac{1}{2}e_if_i\pm\frac{1}{2}q^{-\frac{i^2}{2}}e_if_iz, i=0,1,\cdots,n-1\right\}\bigcup \left\{e_if_j+e_jf_i, i<j\right\}$$
forms a complete set of primitive central idempotents.
\end{prop}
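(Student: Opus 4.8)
The plan is to check directly that the listed elements are nonzero central idempotents, pairwise orthogonal, with sum $1$, and then to obtain primitivity for free from a dimension count of the center via Lemma~\ref{lem2-3}.

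First I would record the multiplication rules in the basis $\{e_if_j,\ e_if_jz\mid 0\le i,j\le n-1\}$. Using $xe_i=q^ie_i$, $yf_j=q^jf_j$, $ze_i=f_iz$, $zf_j=e_jz$ and $z^2=\sum_{k=0}^{n-1}e_ky^k$ one gets $e_if_jz^2=q^{ij}e_if_j$, hence
\begin{eqnarray*}
&& (e_if_j)(e_kf_l)=\delta_{ik}\delta_{jl}\,e_if_j,\qquad (e_if_j)(e_kf_lz)=\delta_{ik}\delta_{jl}\,e_if_jz,\\
&& (e_if_jz)(e_kf_l)=\delta_{il}\delta_{jk}\,e_if_jz,\qquad (e_if_jz)(e_kf_lz)=\delta_{il}\delta_{jk}\,q^{ij}\,e_if_j.
\end{eqnarray*}
Write $g_i^{\pm}:=\tfrac12 e_if_i\pm\tfrac12 q^{-i^2/2}e_if_iz$ for $0\le i\le n-1$ and $g_{ij}:=e_if_j+e_jf_i$ for $i<j$. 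With the four rules above the verifications are routine: the identity $(e_if_iz)^2=q^{i^2}e_if_i$ yields $(g_i^{\pm})^2=g_i^{\pm}$ and $g_i^+g_i^-=0$; the first rule yields $g_{ij}^2=g_{ij}$; and the first and third rules yield $g_i^{\pm}g_j^{\pm}=0$ for $i\neq j$, $g_{ij}g_{kl}=0$ whenever $(i,j)\neq(k,l)$, and $g_i^{\pm}g_{kl}=0$ in every case. Each $g_i^{\pm}$ and $g_{ij}$ is nonzero because $e_if_i$, $e_if_iz$, and $e_if_j$ (for $i\neq j$) are distinct basis vectors. Finally $\sum_{i}(g_i^++g_i^-)+\sum_{i<j}g_{ij}=\sum_{i,j}e_if_j=\bigl(\sum_ie_i\bigr)\bigl(\sum_jf_j\bigr)=1$. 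Centrality of each element follows at once from Lemma~\ref{lem2-3}: in all of them the coefficients of the $e_if_jz$ vanish off the diagonal $i=j$, and the coefficients of the $e_if_j$ are symmetric in $i$ and $j$.

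It remains to see that these idempotents are primitive in $\mathbb{Z}(H_{2n^2})$. Being nonzero and pairwise orthogonal they are linearly independent, and there are $2n+\binom{n}{2}=\tfrac{n(n+3)}{2}$ of them. On the other hand Lemma~\ref{lem2-3} exhibits $\{e_if_i\}\cup\{e_if_j+e_jf_i\mid i<j\}\cup\{e_if_iz\}$ as a $k$-basis of $\mathbb{Z}(H_{2n^2})$, so $\dim_k\mathbb{Z}(H_{2n^2})=\tfrac{n(n+1)}{2}+n=\tfrac{n(n+3)}{2}$. Hence the $g_i^{\pm}$ and $g_{ij}$ form a $k$-basis of $\mathbb{Z}(H_{2n^2})$; since they are orthogonal, sum to $1$, and span the commutative algebra $\mathbb{Z}(H_{2n^2})$, each of them spans its own block $g\cdot\mathbb{Z}(H_{2n^2})=kg$, so $\mathbb{Z}(H_{2n^2})\cong k^{\,n(n+3)/2}$ and the $g_i^{\pm}$, $g_{ij}$ are exactly its primitive idempotents. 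Therefore the set is a complete set of primitive central idempotents of $H_{2n^2}$.

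The one point needing care is the symbol $q^{-i^2/2}$: for $n$ odd one reads $q^{1/2}:=q^{(n+1)/2}$, and for $n$ even one fixes a square root of $q$ in $k$ once and for all; granted this, the equality $\bigl(q^{-i^2/2}\bigr)^2 q^{i^2}=1$ is exactly what makes $g_i^{\pm}$ idempotent and $g_i^+g_i^-=0$. Apart from this, I do not expect any real obstacle: everything reduces to the four displayed multiplication rules together with the dimension count of the center.
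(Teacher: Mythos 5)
Your proof is correct, and it reaches primitivity by a different dimension count than the paper does. The paper first classifies \emph{all} central idempotents by writing a central element as $c_0+c_1$ (via Lemma~\ref{lem2-3}) and solving $c^2=c$ coefficient by coefficient, and then proves primitivity by showing that $e_if_j+e_jf_i$ generates a $4$-dimensional two-sided ideal (using that $H_{2n^2}(e_if_j)$ is a minimal left ideal of dimension $2$) while each $\frac12 e_if_i\pm\frac12 q^{-i^2/2}e_if_iz$ generates a $1$-dimensional one, so that the ideal dimensions sum to $2n+4\cdot\frac{n^2-n}{2}=2n^2=\dim H_{2n^2}$. You instead verify orthogonality and idempotency directly from the multiplication table of the basis $\{e_if_j,\,e_if_jz\}$ and then compare the number of idempotents, $2n+\binom{n}{2}=\frac{n(n+3)}{2}$, with $\dim_k\mathbb{Z}(H_{2n^2})=\frac{n(n+1)}{2}+n$, which Lemma~\ref{lem2-3} gives immediately; equality forces the idempotents to be a basis of the center, whence $\mathbb{Z}(H_{2n^2})\cong k^{n(n+3)/2}$ and primitivity follows. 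Your route buys two things: it avoids both the classification step (solving $c^2=c$, which the paper needs only as scaffolding) and the claim that $H_{2n^2}(e_if_j)$ is minimal, at the cost of not exhibiting the block structure $k^{\oplus 2n}\oplus M_2(k)^{\oplus\frac{n^2-n}{2}}$ that the paper extracts as Corollary~\ref{cor2-5}. Your closing remark on the meaning of $q^{-i^2/2}$ (taking $q^{1/2}=q^{(n+1)/2}$ for odd $n$, and fixing a square root of $q$ in $k$ for even $n$) addresses a point the paper passes over silently and is worth keeping.
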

\begin{proof}
Let
$$c_0=\sum_{i<j}^{n-1}a_{ij}(e_if_j+e_jf_i), \quad c_1=\sum_{i=0}^{n-1}\big(a_{ii}e_if_i+b_{ii}e_if_iz\big).$$
By Lemma \ref{lem2-3}, an element $c\in \mathbb{Z}(H_{2n^2})$ if and only if $c=c_0+c_1.$
If $c$ is an idempotent in addition, i.e., $c^2=c$,  we have $c_0^2+c_1^2=c_0+c_1$. Note that $$c_0^2=\left(\sum_{i<j}^{n-1}a_{ij}(e_if_j+e_jf_i)\right)^2=\sum_{i< j}^{n-1}a_{ij}^2\left(e_if_j+e_jf_i\right),$$
 and
\begin{eqnarray*}
c_1^2&=&\sum_{i=0}^{n-1}\big(a_{ii}e_if_i+b_{ii}e_if_iz\big)^2\\
&=&\sum_{i=0}^{n-1}\big(a_{ii}^2e_if_i+2a_{ii}b_{ii}e_if_iz+b_{ii}^2e_if_iz^2\big)\\
&=&\sum_{i=0}^{n-1}\big(a_{ii}^2e_if_i+2a_{ii}b_{ii}e_if_iz+b_{ii}^2e_if_i(\sum_{j=0}^{n-1} x^jf_j)\big)\\
&=&\sum_{i=0}^{n-1}\big(a_{ii}^2e_if_i+2a_{ii}b_{ii}e_if_iz+q^{i^2}b_{ii}^2e_if_i\big).
\end{eqnarray*}
Hence $c^2=c$ if and only if $c_0^2=c_0$ and $c_1^2=c_1$.
One the other hand, $c_0^2=c_0$ implies that $a_{ij}=0$ or $a_{ij}=1$ for $i\neq j$.
Therefore
$$c_0=\sum_{\hbox{some pairs } (i, j) \hbox{ with } i<j} (e_if_j+e_jf_i).$$
 As for $c_1$,
the equality $c_1^2=c_1$ implies that $a_{ii}^2+q^{i^2}b_{ii}^2=a_{ii}$ and $2a_{ii}b_{ii}=b_{ii}$.

If $b_{ii}=0$, then $a_{ii}^2=a_{ii}$, and $a_{ii}=0$ or $a_{ii}=1$, it follows that $c_1=\sum\limits_{ \hbox{ some } 0\leq i\leq n-1} e_if_i;$

If $b_{ii}\neq0$, then $2a_{ii}=1$, and $b_{ii}=\pm\frac{1}{2}q^{-\frac{i^2}{2}}$, it follows that
$$c_1=\sum_{ \hbox{ some } 0\leq i\leq n-1}\left(\frac{1}{2}e_if_i\pm\frac{1}{2}q^{-\frac{i^2}{2}}e_if_iz\right ).$$

It is known that all the elements in Proposition \ref{prop2-4} are central idempotents and the sum of these elements is $1$. It is easy to see that
$H_{2n^2}(e_if_j)=(e_jf_i)H_{2n^2}$ is minimal as left or right ideal of $H_{2n^2}$ and
$\dim H_{2n^2}(e_if_j)=2$. It follows that
each central idempotents $e_if_j+e_jf_i$ generates a 4-dimensional ideal of $H_{2n^2}$.
 There are $\frac{n^2-n}{2}$ such central idempotents. Furthermore, there are $2n$ central idempotents $$\frac{1}{2}e_if_i\pm\frac{1}{2}q^{-\frac{i^2}{2}}e_if_i$$
generates one dimension ideal of $H_{2n^2}$.
The sum of the dimension of these ideals is
$$2n+4\cdot\frac{n^2-n}{2}=2n^2=\dim H_{2n^2}.$$
This implies central idempotents $e_if_j+e_jf_i$ and $\frac{1}{2}e_if_i\pm\frac{1}{2}q^{-\frac{i^2}{2}}e_if_i$
 are all primitive.

 The proof is completed.
\end{proof}
\begin{cor}\label{cor2-5} As an algebra, we have
$$H_{2n^2}=k^{\oplus 2n}\oplus M_2(k)^{\oplus \frac{n^2-n}{2}}.$$
\end{cor}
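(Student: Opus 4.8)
The plan is to deduce the statement directly from Proposition~\ref{prop2-4} together with the already-noted fact that $H_{2n^2}$ is semisimple. Since $H_{2n^2}$ is a finite-dimensional semisimple $k$-algebra, the Artin--Wedderburn theorem presents it as a direct product of simple algebras, and these simple factors are precisely the two-sided ideals generated by the primitive central idempotents. A complete list of the latter is supplied by Proposition~\ref{prop2-4}: the $2n$ idempotents $\frac{1}{2}e_if_i\pm\frac{1}{2}q^{-\frac{i^2}{2}}e_if_i z$, each spanning a $1$-dimensional ideal, and the $\frac{n^2-n}{2}$ idempotents $e_if_j+e_jf_i$ with $i<j$, each generating a $4$-dimensional ideal. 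So the only remaining task is to identify each such block, as a $k$-algebra, with a full matrix algebra over $k$.

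A $1$-dimensional simple algebra is of course $k$, which handles the $2n$ first-type blocks. For a $4$-dimensional block $B=(e_if_j+e_jf_i)H_{2n^2}$ with $i<j$ I would argue in one of two ways. Abstractly, $B\cong M_d(D)$ for a division $k$-algebra $D$; since $e_if_j$ is a nonzero idempotent of $B$ distinct from the identity $e_if_j+e_jf_i$ of $B$ (because $e_jf_i\neq 0$), $B$ is not a division algebra, and then $\dim_k B=d^2\dim_k D=4$ forces $d=2$ and $D=k$. Equivalently, the minimal left ideal $H_{2n^2}e_if_j$, which has $k$-dimension $2$ by the proof of Proposition~\ref{prop2-4} and lies in $B$ since $e_if_j=(e_if_j+e_jf_i)e_if_j$, is a simple $B$-module, so comparing with $\dim_k M_d(D)=d^2\dim_k D$ and $\dim_k(\text{simple }M_d(D)\text{-module})=d\dim_k D$ again gives $B\cong M_2(k)$. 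Most concretely, one may check that $e_if_j$, $e_jf_i$, $e_if_j z$ and $q^{-ij}e_jf_i z$ form a system of matrix units in $B$, exhibiting $B\cong M_2(k)$ explicitly; this uses only the relations $ze_k=f_kz$, $zf_k=e_kz$ and the value of $z^2$ already computed above.

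Assembling the factors then yields $H_{2n^2}\cong k^{\oplus 2n}\oplus M_2(k)^{\oplus\frac{n^2-n}{2}}$, in agreement with the dimension count $2n+4\cdot\frac{n^2-n}{2}=2n^2=\dim_k H_{2n^2}$ recorded in the proof of Proposition~\ref{prop2-4}. The only point requiring a moment's care is excluding the other possibilities for a $4$-dimensional block --- that it could be a $4$-dimensional division algebra, or $k^{\oplus 4}$, or fail to split over $k$ --- but it is already indecomposable as a two-sided ideal, hence simple, and the orthogonal idempotents $e_if_j,e_jf_i$ (equivalently, the $2$-dimensional simple module) rule out the division-algebra case instantly; so there is no real obstacle, and the corollary is essentially a restatement of Proposition~\ref{prop2-4} in Wedderburn form.
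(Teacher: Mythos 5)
Your argument is correct and follows essentially the same route the paper intends: the corollary is read off from Proposition~\ref{prop2-4} via Wedderburn, with the $4$-dimensional blocks identified as $M_2(k)$ using the $2$-dimensional minimal left ideal $H_{2n^2}e_if_j$ (the paper records exactly this in the proof of Proposition~\ref{prop2-4}). Your additional explicit system of matrix units $e_if_j,\ e_jf_i,\ e_if_jz,\ q^{-ij}e_jf_iz$ checks out and is a nice concrete supplement, but it does not change the underlying approach.
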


\section{representations of $H_{2n^2}$}
\label{sect-3}
As is known to all, $H_{2n^2}$ is semisimple.
In the section, we give all the finite dimensional irreducible
$H_{2n^2}$-modules and investigate the decomposition formulas of
the tensor product of two irreducible $H_{2n^2}$-modules.

Set
$$\sigma(m)=\left\{
             \begin{array}{ll}
               1, & \hbox{ if } 0\leq m\leq n-1; \\
               -1, & \hbox{if }  n\leq m\leq 2n-1.
             \end{array}
           \right.
$$
Let $S_m, m\in \mathbb{Z}_{2n}$ be a one-dimensional irreducible $H_{2n^2}$-module  with basis $v^{m}$, the actions of $H_{2n^2}$ on $S_m$ are

\begin{eqnarray*}
  &&x\cdot v^{m}=q^{m}v^{m},\\
  &&y\cdot v^{m}=q^{m}v^{m},\\
  &&z\cdot v^{m}=\sigma(m)q^{\frac{m^2}{2}}v^{m}.
\end{eqnarray*}
It is easy  to see that
$$e_j\cdot v^{m}=\frac{1}{n}\sum_{k=0}^{n-1}q^{-kj} x^k\cdot v^{m}\\
=\frac{1}{n}\sum_{k=0}^{n-1}q^{-kj+km} v^{m}\\
=\left\{
\begin{array}{ll}
0, \quad j\neq m(\mathrm{mod} ~n) ,   \\
v^{m}, j= m(\mathrm{mod} ~n).
\end{array}
\right.
$$
%

Let $S_{i,j}$ be the 2-dimensional irreducible $H_{2n^2}$-module
with the basis $v_1^{ij}$ and $v_2^{ij}$, where $0\leq i<j\leq n-1$, and
the actions of $H_{2n^2}$ on $S_{i,j}$ are
\begin{eqnarray*}
x(v_1^{ij},v_2^{ij})&=&(v_1^{ij},v_2^{ij})\left(
                                          \begin{array}{cc}
                                            q^i & 0 \\
                                            0 & q^j \\
                                          \end{array}
                                        \right),\\
y(v_1^{ij},v_2^{ij})&=&(v_1^{ij},v_2^{ij})\left(
                                          \begin{array}{cc}
                                            q^j & 0 \\
                                            0 & q^i \\
                                          \end{array}
                                        \right),\\
z(v_1^{ij},v_2^{ij})&=&(v_1^{ij},v_2^{ij})\left(
                                          \begin{array}{cc}
                                            0 & q^{ij} \\
                                            1 & 0 \\
                                          \end{array}
                                        \right).
\end{eqnarray*}
It is easy to see that
\begin{eqnarray*}
e_k\cdot v_1^{ij}&=&\left\{
\begin{array}{ll}
0, \quad k\neq i,  \\
v_1^{ij},\quad k=i.
\end{array}
\right.
\end{eqnarray*}
\begin{eqnarray*}
e_k\cdot v_2^{ij}&=&\left\{
\begin{array}{ll}
0, \quad k\neq j,   \\
v_2^{ij},\quad k=j.
\end{array}
\right.
\end{eqnarray*}

By Corollary \ref{cor2-5}, we have
\begin{prop}
The set
$$\left\{S_m,m\in \mathbb{Z}_{2n}, \} \cup \{S_{i,j},
0\leq i<j\leq n-1\right\}$$
 forms a complete list of non-isomorphic irreducible
$H_{2n^2}$-modules.
\end{prop}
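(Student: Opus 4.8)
The plan is to reduce everything to Corollary~\ref{cor2-5}. Since $H_{2n^2}$ is semisimple with $H_{2n^2}\cong k^{\oplus 2n}\oplus M_2(k)^{\oplus\frac{n^2-n}{2}}$ as an algebra, its isomorphism classes of simple modules are in bijection with the Wedderburn factors: there are exactly $2n$ one-dimensional simple modules and exactly $\frac{n^2-n}{2}$ two-dimensional simple modules, and no others. Hence it is enough to exhibit $2n$ pairwise non-isomorphic one-dimensional $H_{2n^2}$-modules and $\frac{n^2-n}{2}$ pairwise non-isomorphic two-dimensional simple $H_{2n^2}$-modules, and the families $\{S_m\}_{0\le m\le 2n-1}$ and $\{S_{i,j}\}_{0\le i<j\le n-1}$ have exactly these cardinalities.

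First I would check that the prescribed actions really define $H_{2n^2}$-modules, i.e. that they respect the relations of Definition~\ref{defn1-2}. For $S_m$ this reduces to $q^{mn}=1$, the evident commutations, and the identity $z^2\cdot v^m=q^{m^2}v^m=\frac1n\sum_{i,j}q^{-ij+mi+mj}\,v^m$, which follows from $-ij+mi+mj=m^2-(i-m)(j-m)$ together with the orthogonality relation $\frac1n\sum_{a,b=0}^{n-1}q^{-ab}=1$. For $S_{i,j}$ one checks that the displayed matrices $M_x,M_y,M_z$ satisfy $M_x^n=M_y^n=I$, $M_xM_y=M_yM_x$, $M_zM_x=M_yM_z$, $M_zM_y=M_xM_z$ and $M_z^2=\frac1n\sum_{k,l}q^{-kl}M_x^kM_y^l$; only the last is nontrivial, and entrywise it reduces to the same Gauss-sum identity via $-kl+ik+jl=ij-(k-j)(l-i)$, yielding $M_z^2=q^{ij}I$ as wanted.

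Next I would prove simplicity and pairwise non-isomorphism. The $S_m$ are one-dimensional, hence simple, and $x$ acts on $S_m$ by the scalar $q^m$; so $S_m\cong S_{m'}$ forces $m\equiv m'\pmod n$, and for $m\neq m'$ in $\{0,\dots,2n-1\}$ this means $\{m,m'\}=\{m_0,m_0+n\}$ for some $0\le m_0\le n-1$. A short computation with the fixed square root of $q$ gives $q^{(m_0+n)^2/2}=q^{m_0^2/2}$ (using $q^{m_0 n}=1$ and $q^{n^2/2}=1$), whence $z$ acts by $\sigma(m_0)q^{m_0^2/2}$ on $S_{m_0}$ and by $-\sigma(m_0)q^{m_0^2/2}$ on $S_{m_0+n}$; these differ because $\ch k\neq 2$. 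Thus the $2n$ modules $S_m$ are pairwise non-isomorphic. As for $S_{i,j}$: any submodule is $x$-invariant, and $x$ is diagonalizable with distinct eigenvalues $q^i\neq q^j$, so a proper nonzero submodule would be $kv_1^{ij}$ or $kv_2^{ij}$; but $z$ interchanges these two lines (up to the nonzero scalar $q^{ij}$), so $S_{i,j}$ is simple. Moreover the $x$-eigenvalue multiset $\{q^i,q^j\}$ determines the pair $(i,j)$ under $0\le i<j\le n-1$, so distinct pairs give non-isomorphic modules, yielding $\frac{n^2-n}{2}$ classes.

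Collecting these facts: we have produced $2n$ pairwise non-isomorphic one-dimensional simple modules and $\frac{n^2-n}{2}$ pairwise non-isomorphic two-dimensional simple modules, and these numbers exhaust the Wedderburn components of Corollary~\ref{cor2-5}; therefore the list $\{S_m\}\cup\{S_{i,j}\}$ is complete. The only steps I expect to require genuine care are the Gauss-sum verification of the $z^2$-relation on each module and the sign bookkeeping (sensitive to the parity of $n$ and to the choice of square root of $q$) that separates $S_{m_0}$ from $S_{m_0+n}$; the rest is routine once Corollary~\ref{cor2-5} is available.
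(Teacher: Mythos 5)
Your proposal is correct and follows exactly the route the paper takes: the paper simply defines the modules $S_m$ and $S_{i,j}$ and deduces completeness from the Wedderburn decomposition $H_{2n^2}\cong k^{\oplus 2n}\oplus M_2(k)^{\oplus\frac{n^2-n}{2}}$ of Corollary~\ref{cor2-5}, leaving the module-axiom checks, simplicity, and pairwise non-isomorphism implicit. Your write-up supplies those omitted verifications (including the $z^2$-relation and the choice of square root of $q$ needed to separate $S_{m_0}$ from $S_{m_0+n}$), but the underlying counting argument is the same.
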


Let $H$ be a finite dimensional Hopf algebra and
 $M$ and $N$  be two finite dimensional $H$-modules, then $M\otimes N$ is also an $H$-module defined by
$$h\cdot (m\otimes n)=\sum_{(h)} h_{(1)}\cdot m\otimes h_{(2)}\cdot n$$
for all $h\in H$ and $m\in M, n\in N$, where $\Delta(h)=\sum_{(h)}h_{(1)}\otimes h_{(2)}.$
By the Krull-Schmidt Theorem, any finite dimensional $H$-module can be decomposed into the direct sum of indecomposable $H$-modules.

Suppose that $S_m$ and $S_{m'}$ are two one dimensional irreducible $H_{2n^2}$-modules with the basis $v^m$ and $v^{m'}$ respectively. Then $S_m \otimes S_{m'}$ is also an $H_{2n^2}$-module with basis $v^m \otimes v^{m'}$, and the actions of $H_{2n^2}$
 on $S_m \otimes S_{m'}$ are as follows:
 \begin{eqnarray*}
  x\cdot(v^{m}\otimes v^{m'})&=&q^{m+m'}(v^{m}\otimes v^{m'}),\\
  y\cdot(v^{m}\otimes v^{m'})&=&q^{m+m'}(v^{m}\otimes v^{m'}),\\
  z\cdot(v^{m}\otimes v^{m'})&=&\big(\sum_{i=0}^{n-1}e_i\otimes y^i\big)(z\otimes z)(v^{m}\otimes v^{m'})
  =\sigma(m)\sigma(m')q^{\frac{m^2+{m'}^2}{2}}\big(\sum_{i=0}^{n-1}e_i\otimes y^i\big)(v^{m}\otimes v^{m'})\\
&=&\sigma(m)\sigma(m')q^{\frac{m^2+{m'}^2}{2}}\sum_{i=0}^{n-1}e_i\cdot v^{m}\otimes q^{im'}v^{m'}\\
&=&\left\{
\begin{array}{ll}
\sigma(m)\sigma(m') q^{\frac{m^2+{m'}^2}{2}}\cdot q^{mm'}(v^{m}\otimes v^{m'}), \quad 0\leq m \leq n-1 ,   \\
\sigma(m)\sigma(m') q^{\frac{m^2+{m'}^2}{2}}\cdot q^{(m-n)m'}(v^{m}\otimes v^{m'}), \quad n\leq m \leq 2n-1.
\end{array}
\right.
 \end{eqnarray*}
Therefore, we have
\begin{prop}\label{lem3-2} The following statements hold.
\begin{enumerate}
  \item $S_m\otimes S_{m'} \cong S_{(m+m')(\mod n)},$
  for all $0 \leq m,m'\leq n-1$;
  \item $S_m\otimes S_{m'} \cong S_{(m+m')},$
  for all $0 \leq m\leq n-1$, $n \leq m'\leq 2n-1$ and $n \leq m+m'\leq 2n-1$;
  or  $0 \leq m'\leq n-1$, $n \leq m\leq 2n-1$ and $n \leq m+m'\leq 2n-1$;
  \item $S_m\otimes S_{m'} \cong S_{(m+m'-n)},$ for all $0 \leq m\leq n-1$, $n \leq m'\leq 2n-1$ and $2n \leq m+m'\leq 3n-1$; or $0 \leq m'\leq n-1$, $n \leq m\leq 2n-1$ and $2n \leq m+m'\leq 3n-1$;
\item $S_m\otimes S_{m'} \cong S_{(m+m')((\mod 2n)},$ for all $n \leq m, m'\leq 2n-1$ and $2n \leq m+m'\leq 3n-1$;
\item $S_m\otimes S_{m'} \cong S_{(m+m')((\mod 3n)},$ for all $n \leq m, m'\leq 2n-1$ and $3n \leq m+m'\leq 4n-1$.
\end{enumerate}
\end{prop}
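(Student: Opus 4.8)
The plan is to prove each of the five isomorphisms by the standard device: $S_m\otimes S_{m'}$ is a one-dimensional module (since $\dim S_m=\dim S_{m'}=1$), hence irreducible, so it suffices to identify which module in the classified list it equals, and by the Proposition listing all irreducibles this amounts to matching the scalars by which $x$, $y$, $z$ act. The action of $x$ and $y$ is already computed above: both act by $q^{m+m'}$ on $v^m\otimes v^{m'}$. So the entire content is to compute the scalar $\lambda$ by which $z$ acts and compare it with $\sigma(\ell)q^{\ell^2/2}$ for the appropriate candidate index $\ell$, where $\ell$ is determined modulo $n$ by the $x$-eigenvalue $q^{m+m'}$ but must be pinned down in $\mathbb{Z}_{2n}$ by the sign and the half-power of $q$ that $z$ produces.

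First I would record, from the display already in the excerpt, that
$$z\cdot(v^m\otimes v^{m'})=\sigma(m)\sigma(m')\,q^{\frac{m^2+m'^2}{2}}\,q^{\bar m m'}(v^m\otimes v^{m'}),$$
where $\bar m=m$ if $0\le m\le n-1$ and $\bar m=m-n$ if $n\le m\le 2n-1$; the case split in the Proposition is exactly the split according to whether $m,m'$ lie in $\{0,\dots,n-1\}$ or $\{n,\dots,2n-1\}$, and then according to the range of $m+m'$. In each case one sets $\ell$ equal to the claimed index (e.g.\ $\ell=(m+m')\bmod n$ in (1), $\ell=m+m'$ in (2), $\ell=m+m'-n$ in (3), etc.) and verifies two things: that $q^\ell=q^{m+m'}$, which is immediate since all the claimed $\ell$ are congruent to $m+m'$ modulo $n$; and that $\sigma(\ell)q^{\ell^2/2}$ equals the scalar above. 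The latter is the only real computation: writing $\ell=m+m'+cn$ for the relevant integer $c\in\{-n,\dots\}$ (in fact $c\in\{-1,0,1\}$ across the cases after the appropriate reduction), one expands
$$\frac{\ell^2}{2}=\frac{(m+m'+cn)^2}{2}=\frac{m^2+m'^2}{2}+mm'+c n(m+m')+\tfrac{1}{2}c^2n^2,$$
and since $q$ is a primitive $n$-th root of unity, $q^{cn(m+m')}=1$ and $q^{c^2n^2/2}=(q^{n^2/2})^{c^2}$; one checks $q^{n^2/2}=\pm1$ according to the parity of $n$ (it is $1$ when $n$ is odd and equals $(q^{n/2})^n=(-1)^n$-type consideration otherwise — but in all five cases the sign bookkeeping is arranged precisely so that $\sigma(\ell)$ absorbs it), so that $q^{\ell^2/2}=\sigma(m)\sigma(m')\sigma(\ell)^{-1}q^{(m^2+m'^2)/2}q^{mm'}$ up to the sign that $\sigma(\ell)$ supplies. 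Matching the exponent $\bar m m'$ against $mm'$ requires noting that when $n\le m\le 2n-1$ we have $\bar m m'=mm'-nm'$ and $q^{-nm'}=1$, so the two agree.

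I expect the main obstacle to be purely the sign bookkeeping in the term $q^{c^2n^2/2}$ together with the value of $\sigma$ on the reduced index: one must check, case by case, that $\sigma(m)\sigma(m')$ times the sign coming from $q^{c^2n^2/2}$ equals $\sigma(\ell)$ for the claimed $\ell$, and this is where the five-fold case division (rather than a single clean formula) is forced — the ranges $0\le m+m'\le n-1$, $n\le m+m'\le 2n-1$, $2n\le m+m'\le 3n-1$, $3n\le m+m'\le 4n-1$ correspond exactly to the possible values of the carry. Once the scalar for $z$ is verified to match $\sigma(\ell)q^{\ell^2/2}$ in each case, the isomorphism $S_m\otimes S_{m'}\cong S_\ell$ follows because a one-dimensional module is determined up to isomorphism by the triple of scalars by which $x$, $y$, $z$ act, and the classification Proposition guarantees $S_\ell$ is the unique irreducible with those scalars. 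I would present the computation once in full for case (1) and then indicate the (identical in structure) modifications for (2)--(5).
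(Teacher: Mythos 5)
Your proposal is correct and follows essentially the same route as the paper: the paper's ``proof'' is exactly the displayed computation of the $x$-, $y$-, and $z$-actions on $v^m\otimes v^{m'}$ (with the scalar $\sigma(m)\sigma(m')q^{(m^2+m'^2)/2}q^{\bar m m'}$) followed by the observation that a one-dimensional module is determined by these scalars, and you supply the case-by-case matching with $\sigma(\ell)q^{\ell^2/2}$ that the paper leaves implicit. The only caveat, shared with the paper, is the implicit choice of $q^{1/2}$ making $q^{n^2/2}=1$ (and your parenthetical $c\in\{-1,0,1\}$ should read $c\in\{0,-1,-2,-3\}$), but neither affects the argument.
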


Now we deal with the tensor product of one-dimension irreducible $H_{2n^2}$-module and two-dimension irreducible $H_{2n^2}$-module. Suppose that $S_m$ and $S_{i,j}$ are two $H_{2n^2}$-modules, with basis $v^m$ and $v_1^{ij}, v_2^{ij}$ respectively. Then $S_m \otimes S_{i,j}$ is also an $H_{2n^2}$-module with basis $v^m \otimes v_1^{ij}$, and $v^m \otimes v_2^{ij}$. The actions of $H_{2n^2}$
 on $S_m \otimes S_{i,j}$ are as follows:
\begin{eqnarray*}
  x\cdot(v^{m}\otimes v_1^{ij})&=&q^{m+i}(v^{m}\otimes v_1^{ij}),\\
  y\cdot(v^{m}\otimes v_1^{ij})&=&q^{m+j}(v^{m}\otimes v_1^{ij}),\\
  z\cdot(v^{m}\otimes v_1^{ij})&=&\big(\sum_{k=0}^{n-1}e_k \otimes y^k\big)(z\otimes z)(v^{m}\otimes v_1^{ij})\\
&=&\sigma(m) q^{\frac{m^2}{2}}\big(\sum_{k=0}^{n-1}e_k v^{m}\otimes y^k v_2^{ij}\big)\\
&=&\sigma(m) q^{\frac{m^2}{2}+im} v^{m}\otimes v_2^{ij},
 \end{eqnarray*}
and
\begin{eqnarray*}
  x\cdot(v^{m}\otimes v_2^{ij})&=&q^{m+j}(v^{m}\otimes v_2^{ij}),\\
  y\cdot(v^{m}\otimes v_2^{ij})&=&q^{m+i}(v^{m}\otimes v_2^{ij}),\\
  z\cdot(v^{m}\otimes v_2^{ij})&=&\big(\sum_{k=0}^{n-1}e_k \otimes y^k\big)(z\otimes z)(v^{m}\otimes v_2^{ij})
  =\sigma(m) q^{\frac{m^2}{2}+ij}\big(\sum_{k=0}^{n-1}e_k v^{m}\otimes y^k v_1^{ij}\big)\\
&=&\sigma(m)q^{\frac{m^2}{2}+ij+jm}v^{m}\otimes v_1^{ij}
 \end{eqnarray*}
Let $\omega_1=v^{m}\otimes v_1^{ij}$, $\omega_2=\sigma(m) q^{\frac{m^2}{2}+im}v^{m}\otimes v_2^{ij}$, then
\begin{eqnarray*}
x(\omega_1,\omega_2)&=&(\omega_1,\omega_2)\left(
                                          \begin{array}{cc}
                                            q^{m+i} & 0 \\
                                            0 & q^{m+j} \\
                                          \end{array}
                                        \right),\\
y(\omega_1,\omega_2)&=&(\omega_1,\omega_2)\left(
                                          \begin{array}{cc}
                                            q^{m+j} & 0 \\
                                            0 & q^{m+i} \\
                                          \end{array}
                                        \right),\\
z(\omega_1,\omega_2)&=&(\omega_1,\omega_2)\left(
                                          \begin{array}{cc}
                                            0 & q^{(m+i)(m+j)} \\
                                            1 & 0 \\
                                          \end{array}
                                        \right).
\end{eqnarray*}

Similarly, $S_{i,j}\otimes S_m$ is a $H_{2n^2}$-module with basis $ v_1^{ij}\otimes v^m$, and $v_2^{ij}\otimes v^m$. The actions of $H_{2n^2}$
 on $S_{i,j} \otimes S_{m}$ are as follows:
\begin{eqnarray*}
  x\cdot(v_1^{ij}\otimes v^{m})&=&q^{i+m}(v_1^{ij}\otimes v^{m}),\\
  y\cdot(v_1^{ij}\otimes v^{m})&=&q^{j+m}(v_1^{ij}\otimes v^{m}),\\
  z\cdot(v_1^{ij}\otimes v^{m})&=&\sigma(m) q^{\frac{m^2}{2}+jm} v_2^{ij}\otimes v^{m},
 \end{eqnarray*}
and
\begin{eqnarray*}
  x\cdot(v_2^{ij}\otimes v^{m})&=&q^{j+m}(v_2^{ij}\otimes v^{m}),\\
  y\cdot(v_2^{ij}\otimes v^{m})&=&q^{i+m}(v_2^{ij}\otimes v^{m}),\\
  z\cdot(v_2^{ij}\otimes v^{m})&=&\sigma(m) q^{\frac{m^2}{2}+ij+im}v_1^{ij}\otimes v^{m}
 \end{eqnarray*}
Let $\omega_1=v_1^{ij}\otimes v^{m}$, $\omega_2=\sigma(m) q^{\frac{m^2}{2}+jm} v_2^{ij}\otimes v^{m}$, then
\begin{eqnarray*}
x(\omega_1,\omega_2)&=&(\omega_1,\omega_2)\left(
                                          \begin{array}{cc}
                                            q^{i+m} & 0 \\
                                            0 & q^{j+m} \\
                                          \end{array}
                                        \right),\\
y(\omega_1,\omega_2)&=&(\omega_1,\omega_2)\left(
                                          \begin{array}{cc}
                                            q^{j+m} & 0 \\
                                            0 & q^{i+m} \\
                                          \end{array}
                                        \right),\\
z(\omega_1,\omega_2)&=&(\omega_1,\omega_2)\left(
                                          \begin{array}{cc}
                                            0 & q^{(i+m)(j+m)} \\
                                            1 & 0 \\
                                          \end{array}
                                        \right).
\end{eqnarray*}
Therefore we have
\begin{prop}\label{lem3-3} For $H_{2n^2}$-modules $S_m$ and $S_{i,j}$, where $m\in \mathbb{Z}_{2n}$ and $0\leq i<j\leq n-1$, we have
$$S_m\otimes S_{i,j}\cong S_{m+i(\mod n),j+m(\mod n)}\cong S_{i,j}\otimes S_m$$
\end{prop}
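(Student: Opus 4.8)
The plan is to read off both isomorphisms directly from the matrix presentations already assembled just before the statement. For the first one, take the ordered basis $\omega_1=v^m\otimes v_1^{ij}$, $\omega_2=\sigma(m)q^{\frac{m^2}{2}+im}\,v^m\otimes v_2^{ij}$ of $S_m\otimes S_{i,j}$; this is genuinely a basis since the scalar $\sigma(m)q^{\frac{m^2}{2}+im}$ is a nonzero element of $k$. The computation carried out above shows that, in this basis, $x$, $y$, $z$ act by
$$\mathrm{diag}(q^{m+i},q^{m+j}),\qquad \mathrm{diag}(q^{m+j},q^{m+i}),\qquad \begin{pmatrix}0 & q^{(m+i)(m+j)}\\ 1 & 0\end{pmatrix},$$
the cross term coming out cleanly because $\sigma(m)^2=1$ and $m^2+im+ij+jm=(m+i)(m+j)$, so the factors $\sigma(m)$ and $q^{m^2/2}$ cancel. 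Writing $i'$ and $j'$ for the residues of $m+i$ and $m+j$ modulo $n$ and using $q^n=1$ to reduce all exponents (in particular $(m+i)(m+j)\equiv i'j'$ mod $n$), these are exactly the defining matrices of $S_{i',j'}$ from Section~\ref{sect-3}. Hence $S_m\otimes S_{i,j}\cong S_{i',j'}=S_{m+i\,(\mathrm{mod}\,n),\,j+m\,(\mathrm{mod}\,n)}$.

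A minor point to settle is that $S_{i',j'}$ was only defined for $i'<j'$. Since $0\le i<j\le n-1$ forces $i\not\equiv j$ mod $n$, we have $i'\ne j'$, so $S_{i',j'}$ is indeed a $2$-dimensional irreducible; and when $i'>j'$ one uses the isomorphism $S_{i',j'}\cong S_{j',i'}$ obtained by interchanging the two basis vectors and rescaling one of them by $q^{i'j'}$, which one checks intertwines the two presentations (the diagonal actions of $x,y$ get swapped and the $z$-matrix is conjugated into standard form). So the right-hand side of the asserted isomorphism is unambiguous.

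For the remaining isomorphism $S_{i,j}\otimes S_m\cong S_{i',j'}$, I would run the identical argument with the ordered basis $\omega_1=v_1^{ij}\otimes v^m$, $\omega_2=\sigma(m)q^{\frac{m^2}{2}+jm}\,v_2^{ij}\otimes v^m$: by the computation above, on $S_{i,j}\otimes S_m$ the operators $x$, $y$, $z$ act in this basis by $\mathrm{diag}(q^{i+m},q^{j+m})$, $\mathrm{diag}(q^{j+m},q^{i+m})$ and $\left(\begin{smallmatrix}0 & q^{(i+m)(j+m)}\\ 1 & 0\end{smallmatrix}\right)$, which after the same reduction mod $n$ are once more the matrices of $S_{i',j'}$. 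Combining the two computations gives $S_m\otimes S_{i,j}\cong S_{i',j'}\cong S_{i,j}\otimes S_m$; alternatively, the commutativity $S_m\otimes S_{i,j}\cong S_{i,j}\otimes S_m$ is automatic from Proposition~\ref{prop1-2}, a universal $R$-matrix furnishing a natural isomorphism $M\otimes N\cong N\otimes M$ for all modules. There is no real obstacle in this argument; the only things requiring a little care are the consistent reduction of exponents modulo $n$ and the non-vanishing of the scalars defining $\omega_2$, both of which are immediate.
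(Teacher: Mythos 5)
Your proposal is correct and follows essentially the same route as the paper, which likewise reads off both isomorphisms from the matrix presentations of $x$, $y$, $z$ in the bases $\omega_1,\omega_2$ computed just before the statement. Your extra care about the case $i'>j'$ (supplying the swap-and-rescale isomorphism $S_{i',j'}\cong S_{j',i'}$) addresses a point the paper leaves implicit, and your check is correct.
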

Suppose that $S_{i,j}$ and $S_{k,l}$ are two $H_{2n^2}$-modules, with basis $v_1^{ij}, v_2^{ij}$ and $v_1^{kl}, v_2^{kl}$ respectively. Then $S_{i,j}\otimes S_{k,l}$ is a $H_{2n^2}$-module with basis $v_1^{ij} \otimes v_1^{kl}$, $v_2^{ij} \otimes v_2^{kl}$, $v_1^{ij} \otimes v_2^{kl}$ and $v_2^{ij} \otimes v_1^{kl}$. The actions of $H_{2n^2}$
 on $S_{i,j}\otimes S_{k,l}$ are as follows:
\begin{eqnarray*}
  x\cdot(v_1^{ij}\otimes v_1^{kl})&=&q^{i+k}(v_1^{ij}\otimes v_1^{kl}),\\
  y\cdot(v_1^{ij}\otimes v_1^{kl})&=&q^{j+l}(v_1^{ij}\otimes v_1^{kl}),\\
  z\cdot(v_1^{ij}\otimes v_1^{kl})&=&\sum_{s=0}^{n-1}e_s\cdot v_2^{ij}\otimes y^s\cdot v_2^{kl}\\
  &=&q^{jk} v_2^{ij}\otimes v_2^{kl},
 \end{eqnarray*}
and
\begin{eqnarray*}
  x\cdot(v_2^{ij}\otimes v_2^{kl})&=&q^{j+l}(v_2^{ij}\otimes v_2^{kl}),\\
  y\cdot(v_2^{ij}\otimes v_2^{kl})&=&q^{i+k}(v_2^{ij}\otimes v_2^{kl}),\\
  z\cdot(v_2^{ij}\otimes v_2^{kl})&=&q^{ij+kl}\sum_{s=0}^{n-1}e_s\cdot v_1^{ij}\otimes y^s\cdot v_1^{kl}\\
  &=&q^{ij+kl+il} v_1^{ij}\otimes v_1^{kl}.
 \end{eqnarray*}
Set $\omega_1=v_1^{ij}\otimes v_1^{kl}$, $\omega_2= q^{jk} v_2^{ij}\otimes v_2^{kl}$, then
\begin{eqnarray*}
x(\omega_1,\omega_2)&=&(\omega_1,\omega_2)\left(
                                          \begin{array}{cc}
                                            q^{i+k} & 0 \\
                                            0 & q^{j+l} \\
                                          \end{array}
                                        \right),\\
y(\omega_1,\omega_2)&=&(\omega_1,\omega_2)\left(
                                          \begin{array}{cc}
                                            q^{j+l} & 0 \\
                                            0 & q^{i+k} \\
                                          \end{array}
                                        \right),\\
z(\omega_1,\omega_2)&=&(\omega_1,\omega_2)\left(
                                          \begin{array}{cc}
                                            0 & q^{(i+k)(j+l)} \\
                                            1 & 0 \\
                                          \end{array}
                                        \right).
\end{eqnarray*}
Similarly, we have
\begin{eqnarray*}
  x\cdot(v_1^{ij}\otimes v_2^{kl})&=&q^{i+l}(v_1^{ij}\otimes v_2^{kl}),\\
  y\cdot(v_1^{ij}\otimes v_2^{kl})&=&q^{j+k}(v_1^{ij}\otimes v_2^{kl}),\\
  z\cdot(v_1^{ij}\otimes v_2^{kl})&=&q^{kl}\sum_{s=0}^{n-1}e_s\cdot v_2^{ij}\otimes y^s\cdot v_1^{kl}\\
  &=&q^{(j+k)l} v_2^{ij}\otimes v_1^{kl},
 \end{eqnarray*}
and
\begin{eqnarray*}
  x\cdot(v_2^{ij}\otimes v_1^{kl})&=&q^{j+k}(v_2^{ij}\otimes v_1^{kl}),\\
  y\cdot(v_2^{ij}\otimes v_1^{kl})&=&q^{i+l}(v_2^{ij}\otimes v_1^{kl}),\\
  z\cdot(v_2^{ij}\otimes v_1^{kl})&=&q^{ij}\sum_{s=0}^{n-1}e_s\cdot v_1^{ij}\otimes y^s\cdot v_2^{kl}\\
 &=&q^{i(j+k)} v_1^{ij}\otimes v_2^{kl}.
 \end{eqnarray*}
Set $\omega_3=v_1^{ij}\otimes v_2^{kl}$, $\omega_4= q^{(j+k)l} v_2^{ij}\otimes v_1^{kl}$, then
\begin{eqnarray*}
x(\omega_3,\omega_4)&=&(\omega_3,\omega_4)\left(
                                          \begin{array}{cc}
                                            q^{i+l} & 0 \\
                                            0 & q^{j+k} \\
                                          \end{array}
                                        \right),\\
y(\omega_3,\omega_4)&=&(\omega_3,\omega_4)\left(
                                          \begin{array}{cc}
                                            q^{j+k} & 0 \\
                                            0 & q^{i+l} \\
                                          \end{array}
                                        \right),\\
z(\omega_3,\omega_4)&=&(\omega_3,\omega_4)\left(
                                          \begin{array}{cc}
                                            0 & q^{(i+l)(j+k)} \\
                                            1 & 0 \\
                                          \end{array}
                                        \right).
\end{eqnarray*}
Therefore, we have
\begin{lem}\label{lem3-4} For two $H_{2n^2}$-modules $S_{i,j}$ and $S_{k,l}$, where $0\leq i<j,k<l\leq n-1$,  we have
$$S_{i,j}\otimes S_{k,l}\cong S_{i+k(\mod n),j+l(\mod n)}\oplus S_{i+l(\mod n),j+k(\mod n)}$$
provided that $i+k\neq j+l(\mod n)$ and $i+l\neq j+k(\mod n)$.
\end{lem}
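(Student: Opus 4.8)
The plan is to read the decomposition off directly from the explicit actions computed just before the statement. First I would note that, after rescaling the standard tensor basis, the four vectors $\omega_1=v_1^{ij}\otimes v_1^{kl}$, $\omega_2=q^{jk}v_2^{ij}\otimes v_2^{kl}$, $\omega_3=v_1^{ij}\otimes v_2^{kl}$, $\omega_4=q^{(j+k)l}v_2^{ij}\otimes v_1^{kl}$ still form a $k$-basis of $S_{i,j}\otimes S_{k,l}$, each being a nonzero scalar multiple of one of the vectors $v_a^{ij}\otimes v_b^{kl}$. The displayed matrix identities for $x,y,z$ on $(\omega_1,\omega_2)$ and on $(\omega_3,\omega_4)$ then show that $W_1:={\rm Span}_k\{\omega_1,\omega_2\}$ and $W_2:={\rm Span}_k\{\omega_3,\omega_4\}$ are $H_{2n^2}$-submodules, so $S_{i,j}\otimes S_{k,l}=W_1\oplus W_2$ as $H_{2n^2}$-modules. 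It then remains to identify $W_1\cong S_{i+k(\mod n),\,j+l(\mod n)}$ and $W_2\cong S_{i+l(\mod n),\,j+k(\mod n)}$.

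For $W_1$ I would set $a\equiv i+k$ and $b\equiv j+l\,(\mod n)$ with $0\le a,b\le n-1$; the hypothesis $i+k\ne j+l\,(\mod n)$ is exactly the condition $a\ne b$. Since $q^n=1$ we have $q^{i+k}=q^a$, $q^{j+l}=q^b$ and $q^{(i+k)(j+l)}=q^{ab}$, so on the ordered basis $(\omega_1,\omega_2)$ the element $x$ acts by $\mathrm{diag}(q^a,q^b)$, $y$ by $\mathrm{diag}(q^b,q^a)$, and $z$ by $\left(\begin{smallmatrix}0&q^{ab}\\1&0\end{smallmatrix}\right)$. If $a<b$ these are precisely the defining matrices of $S_{a,b}$, hence $W_1\cong S_{a,b}$. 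If $a>b$, I interchange $\omega_1$ and $\omega_2$ and rescale the resulting first basis vector by $q^{-ab}$; using $q^{ab}=q^{ba}$ and invertibility of $q$, this transforms the triple $(x,y,z)$ into the defining matrices of $S_{b,a}$. In both cases $W_1\cong S_{i+k(\mod n),\,j+l(\mod n)}$, with the convention (forced by the statement) that $S_{a,b}$ for $a>b$ denotes $S_{b,a}$. The identification $W_2\cong S_{i+l(\mod n),\,j+k(\mod n)}$ is obtained verbatim from the action on $(\omega_3,\omega_4)$, now with $a'\equiv i+l$, $b'\equiv j+k\,(\mod n)$ and $a'\ne b'$ by the second hypothesis. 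Combining the two identifications gives the asserted isomorphism.

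The step I expect to require the most care is this last normalization: when the reduced indices come out in decreasing order, swapping the two basis vectors turns the $z$-matrix into $\left(\begin{smallmatrix}0&1\\q^{ab}&0\end{smallmatrix}\right)$, and one must check that a single diagonal change of basis simultaneously restores both off-diagonal entries to the standard shape $\left(\begin{smallmatrix}0&q^{ba}\\1&0\end{smallmatrix}\right)$ — which works precisely because the scalar demanded by the $(1,2)$-entry and the one demanded by the $(2,1)$-entry agree, this being the identity $q^{ab}=q^{ba}$. It is also worth remarking that the two hypotheses are exactly what is needed for each of $W_1,W_2$ to be a genuine two-dimensional simple module from the list $\{S_{a,b}\}$: if instead $i+k\equiv j+l\,(\mod n)$, then on $W_1$ the elements $x$ and $y$ would act by the same scalar and $z$ would be diagonalizable, so $W_1$ would split into two one-dimensional modules and the formula would have to be modified accordingly.
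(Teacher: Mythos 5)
Your proposal is correct and follows essentially the same route as the paper: the paper's proof is precisely the computation displayed before the lemma, which exhibits the matrices of $x,y,z$ on the bases $(\omega_1,\omega_2)$ and $(\omega_3,\omega_4)$ and reads off the two $2$-dimensional summands. The only difference is that you additionally spell out the swap-and-rescale normalization needed when the reduced indices come out in decreasing order, a point the paper leaves implicit; this is a welcome clarification but not a different argument.
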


Assume that $i+l\equiv j+k(\mod n)$, and $i+k\neq j+l(\mod n)$, set $m=i+l\equiv j+k(\mod n)$,
one sees that $0\leq m<n$.
Let $$\omega_1=v_1^{ij}\otimes v_2^{kl}+ q^{\frac{m^2}{2}-im} v_2^{ij}\otimes v_1^{kl}, \
\omega_2=v_1^{ij}\otimes v_2^{kl}-q^{\frac{m^2}{2}-im} v_2^{ij}\otimes v_1^{kl},$$
we have
\begin{eqnarray*}
  x\cdot\omega_1&=&q^{i+l}v_1^{ij}\otimes v_2^{kl}+ q^{\frac{m^2}{2}-im} \cdot q^{j+k} v_2^{ij}\otimes v_1^{kl}=q^m\omega_1,\\
   y\cdot\omega_1&=&q^{j+k}v_1^{ij}\otimes v_2^{kl}+ q^{\frac{m^2}{2}-im} \cdot q^{i+l}v_2^{ij}\otimes v_1^{kl}=q^m\omega_1,\\
   z\cdot\omega_1&=&q^{(j+k)l}v_2^{ij}\otimes v_1^{kl}+q^{(j+k)i} q^{\frac{m^2}{2}-im} v_1^{ij}\otimes v_2^{kl}\\
   &=&q^{m(m-i)}v_2^{ij}\otimes v_1^{kl}+ q^{\frac{m^2}{2}} v_1^{ij}\otimes v_2^{kl}=q^{\frac{m^2}{2}}\omega_1.
 \end{eqnarray*}
Similarly, we have
\begin{eqnarray*}
   x\cdot\omega_2&=&q^m\omega_2,\\
   y\cdot\omega_2&=&q^m\omega_2,\\
   z\cdot\omega_2&=&-q^{\frac{m^2}{2}}\omega_2.
 \end{eqnarray*}

Now it is easy to see that
\begin{lem}\label{lem3-5} For two $H_{2n^2}$-modules $S_{i,j}$ and $S_{k,l}$, where $0\leq i<j, k<l\leq n-1$,
we have
$$S_{i,j}\otimes S_{k,l}\cong S_{i+k(\mod n),j+l(\mod n)}\oplus S_{i+l(\mod n)}\oplus S_{j+k(\mod n)+n}$$
provided that $i+k\neq j+l(\mod n)$ and $i+l\equiv j+k(\mod n)$.
\end{lem}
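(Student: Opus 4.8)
The plan is to read the decomposition straight off the module computations carried out immediately before the statement. Recall that $S_{i,j}\otimes S_{k,l}$ is four-dimensional with basis $\{v_1^{ij}\otimes v_1^{kl},\ v_2^{ij}\otimes v_2^{kl},\ v_1^{ij}\otimes v_2^{kl},\ v_2^{ij}\otimes v_1^{kl}\}$, and the displayed formulas for the actions of $x$, $y$, $z$ show that the subspace $W_1$ spanned by $v_1^{ij}\otimes v_1^{kl}$ and $v_2^{ij}\otimes v_2^{kl}$, and the subspace $W_2$ spanned by $v_1^{ij}\otimes v_2^{kl}$ and $v_2^{ij}\otimes v_1^{kl}$, are each $H_{2n^2}$-submodules (they are stable under the diagonal $x,y$ and, as the $z$-formulas show, $z$ maps each back into itself). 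Since $x,y,z$ generate the algebra, $S_{i,j}\otimes S_{k,l}=W_1\oplus W_2$, and the proof reduces to identifying the two summands under the present hypotheses.

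For $W_1$ I would use the basis $\omega_1=v_1^{ij}\otimes v_1^{kl}$, $\omega_2=q^{jk}v_2^{ij}\otimes v_2^{kl}$ introduced above: on this basis the matrices of $x$, $y$, $z$ are, with the convention $S_{a,b}=S_{b,a}$ (i.e. after at most relabelling and rescaling the basis), exactly the defining matrices of $S_{i+k(\mod n),\,j+l(\mod n)}$. The hypothesis $i+k\neq j+l(\mod n)$ forces the $x$-eigenvalues $q^{i+k}$ and $q^{j+l}$ to be distinct, and then this two-dimensional module is irreducible: a proper nonzero submodule would be $x$-stable, hence one of the two eigenlines, but $z$ interchanges the eigenlines up to a nonzero scalar. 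Hence $W_1\cong S_{i+k(\mod n),\,j+l(\mod n)}$.

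For $W_2$ I would invoke the other hypothesis $i+l\equiv j+k(\mod n)$, set $m=i+l\equiv j+k(\mod n)$ with $0\le m<n$, and use the vectors $\omega_1=v_1^{ij}\otimes v_2^{kl}+q^{\frac{m^2}{2}-im}v_2^{ij}\otimes v_1^{kl}$ and $\omega_2=v_1^{ij}\otimes v_2^{kl}-q^{\frac{m^2}{2}-im}v_2^{ij}\otimes v_1^{kl}$, which form a basis of $W_2$ because $\ch k\neq 2$. The computation preceding the statement gives $x\cdot\omega_1=y\cdot\omega_1=q^{m}\omega_1$, $z\cdot\omega_1=q^{\frac{m^2}{2}}\omega_1$ and $x\cdot\omega_2=y\cdot\omega_2=q^{m}\omega_2$, $z\cdot\omega_2=-q^{\frac{m^2}{2}}\omega_2$. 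Comparing with the defining action on the one-dimensional modules, $k\omega_1\cong S_m=S_{i+l(\mod n)}$ since $\sigma(m)=1$; and $k\omega_2\cong S_{m+n}=S_{j+k(\mod n)+n}$ once one checks $\sigma(m+n)q^{\frac{(m+n)^2}{2}}=-q^{\frac{m^2}{2}}$, i.e. $q^{mn+\frac{n^2}{2}}=1$, which follows from $q^n=1$ together with the standing convention on the symbol $q^{1/2}$ (this is the one point needing a short separate check, handled slightly differently for $n$ odd and $n$ even).

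Assembling the pieces, $S_{i,j}\otimes S_{k,l}=W_1\oplus k\omega_1\oplus k\omega_2\cong S_{i+k(\mod n),\,j+l(\mod n)}\oplus S_{i+l(\mod n)}\oplus S_{j+k(\mod n)+n}$, and the dimension count $4=2+1+1$ confirms nothing is missing. I expect the only genuine obstacle to be the bookkeeping with the half-integer powers of $q$ in the $W_2$ step, namely verifying the scalar identity $q^{mn+\frac{n^2}{2}}=1$; the rest is a direct transcription of the matrices already computed, together with the irreducibility criterion for two-dimensional modules that the two case hypotheses are designed precisely to control.
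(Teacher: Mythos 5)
Your proposal is correct and follows essentially the same route as the paper: the paper's ``proof'' is precisely the block of computations preceding the statement, which exhibit $W_1$ (spanned by $v_1^{ij}\otimes v_1^{kl}$, $v_2^{ij}\otimes v_2^{kl}$) as a copy of $S_{i+k(\mod n),\,j+l(\mod n)}$ and diagonalize $z$ on $W_2$ via the vectors $\omega_1,\omega_2$ to obtain the two one-dimensional summands. You are in fact slightly more careful than the paper in flagging the scalar identity $q^{mn+\frac{n^2}{2}}=1$ needed to identify $k\omega_2$ with $S_{j+k(\mod n)+n}$, a point the paper passes over silently.
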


Assume that $i+l\neq j+k(\mod n)$, and $i+k\equiv j+l(\mod n)$, denote
$m=i+k\equiv j+l(\mod n)$, one sees that  $0\leq m<n$. Let
$$\omega_1=q^{\frac{m^2}{2}-jk}v_1^{ij}\otimes v_1^{kl}+ v_2^{ij}\otimes v_2^{kl}, \omega_2=q^{\frac{m^2}{2}-jk}v_1^{ij}\otimes v_1^{kl}-v_2^{ij}\otimes v_2^{kl},$$
we have
\begin{eqnarray*}
  x\cdot\omega_1&=&q^{i+k}q^{\frac{m^2}{2}-jk}v_1^{ij}\otimes v_1^{kl}+ q^{j+l}v_2^{ij}\otimes v_2^{kl}=q^m\omega_1,\\
   y\cdot\omega_1&=&q^{j+l}q^{\frac{m^2}{2}-jk}v_1^{ij}\otimes v_1^{kl}+ q^{i+k}v_2^{ij}\otimes v_2^{kl}=q^m\omega_1,\\
   z\cdot\omega_1&=&q^{\frac{m^2}{2}}v_2^{ij}\otimes v_2^{kl}+q^{ij
   +kl+il} v_1^{ij}\otimes v_1^{kl}=q^{\frac{m^2}{2}}\omega_1.
 \end{eqnarray*}
Similarly, we have
\begin{eqnarray*}
   x\cdot\omega_2&=&q^m\omega_2,\\
   y\cdot\omega_2&=&q^m\omega_2,\\
   z\cdot\omega_2&=&-q^{\frac{m^2}{2}}\omega_2.
 \end{eqnarray*}

Now it is easy to see that
\begin{lem}\label{lem3-5} For two $H_{2n^2}$-modules $S_{i,j}$ and $S_{k,l}$, where $0\leq i<j, k<l\leq n-1$,
we have
$$S_{i,j}\otimes S_{k,l}\cong S_{i+k}\oplus S_{j+l}\oplus S_{i+l(\mod n),j+k(\mod n)}.$$
provided that $i+k\equiv j+l(\mod n)$ and $i+l\neq j+k(\mod n)$,
\end{lem}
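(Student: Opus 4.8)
The plan is to assemble the decomposition from data that is already on the table: $S_{i,j}\otimes S_{k,l}$ breaks into two $2$-dimensional $H_{2n^2}$-submodules, of which, under the present hypothesis, one stays irreducible while the other splits off two $1$-dimensional pieces; one then matches all three summands against the classified list of irreducibles.

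First I would record the block splitting $S_{i,j}\otimes S_{k,l}=W_1\oplus W_2$, where $W_1$ is spanned over $k$ by $v_1^{ij}\otimes v_1^{kl}$ and $v_2^{ij}\otimes v_2^{kl}$, and $W_2$ by $v_1^{ij}\otimes v_2^{kl}$ and $v_2^{ij}\otimes v_1^{kl}$: this is immediate from the action matrices displayed before Lemma~\ref{lem3-4}, since $x$, $y$ and $z$ each preserve both subspaces. On $W_2$, in the basis used there the generators act exactly as on $S_{i+l(\mod n),j+k(\mod n)}$, and since $i+l\not\equiv j+k\pmod n$ this is a genuine $2$-dimensional irreducible module (exactly as in the proof of Lemma~\ref{lem3-4}). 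So the remaining task is to decompose $W_1$.

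Next I would draw out the arithmetic content of the hypothesis $i+k\equiv j+l\pmod n$. From $0\le i<j\le n-1$ and $0\le k<l\le n-1$ one has $0<(j+l)-(i+k)=(j-i)+(l-k)<2n$, and this difference is a multiple of $n$, hence equals $n$. Thus $j+l=(i+k)+n$ with $0\le i+k\le n-2$; writing $m:=i+k$ gives $0\le m\le n-1$, $S_{i+k}=S_m$, $S_{j+l}=S_{m+n}$, $\sigma(i+k)=1$ and $\sigma(j+l)=-1$. I would then use the vectors $\omega_1=q^{\frac{m^2}{2}-jk}v_1^{ij}\otimes v_1^{kl}+v_2^{ij}\otimes v_2^{kl}$ and $\omega_2=q^{\frac{m^2}{2}-jk}v_1^{ij}\otimes v_1^{kl}-v_2^{ij}\otimes v_2^{kl}$ introduced just above the statement: they form a basis of $W_1$ (as $2$ is invertible in $k$), and the computation already performed shows $x\cdot\omega_s=q^m\omega_s$, $y\cdot\omega_s=q^m\omega_s$, $z\cdot\omega_1=q^{\frac{m^2}{2}}\omega_1$ and $z\cdot\omega_2=-q^{\frac{m^2}{2}}\omega_2$. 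Comparing with the defining actions on the one-dimensional modules — $z$ acts on $S_m$ by $\sigma(m)q^{\frac{m^2}{2}}=q^{\frac{m^2}{2}}$, and on $S_{m+n}$ by $\sigma(m+n)q^{\frac{(m+n)^2}{2}}$, which a short computation using $q^n=1$ rewrites as $-q^{\frac{m^2}{2}}$ — this identifies $k\omega_1\cong S_{i+k}$ and $k\omega_2\cong S_{j+l}$. Hence $W_1\cong S_{i+k}\oplus S_{j+l}$, and combined with the description of $W_2$ this yields the claimed isomorphism.

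The one delicate point, and where I expect to need care, is the bookkeeping with the fractional exponent $q^{m^2/2}$ together with the signs $\sigma(\cdot)$: one must check that $q^{\frac{(j+l)^2}{2}}$ coincides with $q^{\frac{m^2}{2}}$ (and not its negative) for the fixed square root, and that $\sigma$ takes the value $+1$ on $i+k$ but $-1$ on $j+l$. This is exactly what the arithmetic step secures, because it forces $0\le i+k\le n-2$ and hence $n\le j+l\le 2n-2$. Apart from this, the argument is just a reassembly of computations already carried out above.
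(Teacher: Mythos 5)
Your proposal is correct and follows essentially the same route as the paper: the paper's own argument consists precisely of introducing the vectors $\omega_1=q^{\frac{m^2}{2}-jk}v_1^{ij}\otimes v_1^{kl}\pm v_2^{ij}\otimes v_2^{kl}$, computing the actions of $x,y,z$ on them, and then declaring the decomposition "easy to see". You merely make explicit what the paper leaves implicit — the splitting into the two $2$-dimensional blocks $W_1\oplus W_2$, the arithmetic observation that $i{+}k\equiv j{+}l\ (\mathrm{mod}\ n)$ forces $j+l=i+k+n$ (hence $\sigma(i+k)=1$, $\sigma(j+l)=-1$), and the check that $q^{\frac{(m+n)^2}{2}}=q^{\frac{m^2}{2}}$ for the chosen square root — all of which is sound and consistent with the paper's conventions.
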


Assume that $i+l\equiv j+k(\mod n)$, and $i+k\equiv j+l(\mod n)$.
Then $n$ has to be even and $j=i+\frac{n}{2}$, $l=k+\frac{n}{2}$.
So we have

\begin{lem}\label{lem3-6} For two $H_{2n^2}$-modules $S_{i,j}$ and $S_{k,l}$, where $0\leq i<j,k<l\leq n-1$,  we have
$$S_{i,j}\otimes S_{k,l}\cong S_{i+k}\oplus S_{j+l}\oplus S_{i+l(\mathrm{mod} ~n)}\oplus S_{j+k(\mod n)+n}$$
provided that $i+k\equiv j+l(\mod n)$ and $i+l\equiv j+k(\mod n).$
\end{lem}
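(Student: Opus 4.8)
The plan is to mimic the computations already carried out in Lemmas \ref{lem3-4}--\ref{lem3-5}, but now in the degenerate situation where \emph{both} pairs of exponents collapse modulo $n$. First I would record the structural consequence of the two hypotheses $i+l\equiv j+k\ (\mod n)$ and $i+k\equiv j+l\ (\mod n)$: adding the two congruences gives $2i\equiv 2(j+l)-\cdots$, but more directly, subtracting them yields $l-k\equiv k-l\ (\mod n)$, i.e. $2(l-k)\equiv 0\ (\mod n)$, and similarly $2(j-i)\equiv 0\ (\mod n)$; since $0<j-i<n$ and $0<l-k<n$ this forces $n$ even and $j-i=l-k=\tfrac n2$, hence $j=i+\tfrac n2$ and $l=k+\tfrac n2$, exactly as asserted in the statement. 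This is the only genuinely new observation; once it is in place the rest is bookkeeping.

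Next I would split $S_{i,j}\otimes S_{k,l}$ into the two two-dimensional invariant subspaces already identified: $\mathrm{Span}\{v_1^{ij}\otimes v_1^{kl},\,v_2^{ij}\otimes v_2^{kl}\}$ with the $x,y,z$-action matrices computed just before Lemma \ref{lem3-4} (eigenvalues $q^{i+k},q^{j+l}$ for $x$, swapped for $y$, off-diagonal $q^{(i+k)(j+l)}$ and $1$ for $z$ after the rescaling $\omega_2=q^{jk}v_2^{ij}\otimes v_2^{kl}$), and $\mathrm{Span}\{v_1^{ij}\otimes v_2^{kl},\,v_2^{ij}\otimes v_1^{kl}\}$ with the analogous matrices. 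Under the current hypotheses the first block has $x$ and $y$ acting by the \emph{same} scalar $q^{i+k}=q^{j+l}=q^m$ where $m\equiv i+k\ (\mod n)$, so it is a sum of two one-dimensional modules; diagonalizing the $z$-matrix $\left(\begin{smallmatrix}0&q^{m^2}\\1&0\end{smallmatrix}\right)$ (here one uses $i+k\equiv j+l\equiv m$, so $(i+k)(j+l)\equiv m^2$ in the exponent, up to the $\mod n$ care that the formula already handles) gives $z$-eigenvalues $\pm q^{m^2/2}$, i.e. the summands $S_{i+k}$ and $S_{j+l}$ (one in the range $[0,n-1]$, the other its ``$+n$'' partner, matching the $\sigma$-sign convention). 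I would reuse verbatim the eigenvector computation displayed before Lemma \ref{lem3-5}, namely $\omega_{1,2}=q^{\frac{m^2}{2}-jk}v_1^{ij}\otimes v_1^{kl}\pm v_2^{ij}\otimes v_2^{kl}$, to pin down which sign goes with which label. The second block is handled identically using $\omega_{1,2}=v_1^{ij}\otimes v_2^{kl}\pm q^{\frac{m^2}{2}-im}v_2^{ij}\otimes v_1^{kl}$ from the paragraph before the first Lemma \ref{lem3-5}, now with $m'\equiv i+l\equiv j+k\ (\mod n)$, yielding $S_{i+l(\mod n)}\oplus S_{j+k(\mod n)+n}$.

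Assembling the four one-dimensional pieces gives precisely $S_{i+k}\oplus S_{j+l}\oplus S_{i+l(\mod n)}\oplus S_{j+k(\mod n)+n}$, which is the claim. The main obstacle, such as it is, is purely clerical: keeping the additive labels of the one-dimensional modules consistent with the $\sigma(m)$ sign convention and the $\mod n$ versus ``$+n$'' conventions when several of the sums $i+k$, $j+l$, $i+l$, $j+k$ may exceed $n-1$ or coincide after reduction. I would address this by first proving the $z$-eigenvalues are $\pm q^{(\text{sum})^2/2}$ as \emph{elements of }$k$ (so that $n$-periodicity of the exponent is automatic from $q^n=1$ once the squares are reduced correctly, using evenness of $n$), and only then reading off the labels from the classification of one-dimensional modules $S_m$, $m\in\mathbb Z_{2n}$. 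No step requires anything beyond the explicit action matrices already on the page and the elementary divisibility argument forcing $j=i+\tfrac n2$, $l=k+\tfrac n2$.
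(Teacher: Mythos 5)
Your proposal is correct and follows essentially the same route as the paper: the paper's own argument consists precisely of deducing from the two congruences that $n$ is even with $j=i+\tfrac n2$, $l=k+\tfrac n2$, and then splitting each of the two $2$-dimensional invariant blocks into a pair of one-dimensional modules by reusing the eigenvector computations ($\omega_{1,2}$) displayed before the two preceding lemmas. Your explicit divisibility argument ($2(l-k)\equiv 0$, $2(j-i)\equiv 0 \ (\mathrm{mod}\ n)$) is in fact a welcome elaboration of a step the paper merely asserts.
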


In summary, we have the following  by Lemmas \ref{lem3-4}-\ref{lem3-6}.
\begin{prop}\label{prop3-6} For two $H_{2n^2}$-modules $S_{i,j}$ and $S_{k,l}$, we set
\begin{eqnarray*}
   I_1&=&\{0\leq i<j, k<l<n| i+k\equiv j+l(\mod n) \},\\
   I_2&=&\{0\leq i<j, k<l<n| i+l\equiv j+k(\mod n)\},
 \end{eqnarray*}
then we have
\begin{enumerate}
  \item $S_{i,j}\otimes S_{k,l}\cong S_{i+k(\mod n),j+l(\mod n)}\oplus S_{i+l(\mod n),j+k(\mod n)}$ if $i,j,k,l\notin I_1\cup I_2$;
  \item $S_{i,j}\otimes S_{k,l}\cong S_{i+k}\oplus S_{j+l}\oplus S_{i+l(\mod n),j+k(\mod n)}$ if
$i,j,k,l\in I_1-I_2$;
  \item $S_{i,j}\otimes S_{k,l}\cong S_{i+k(\mod n),j+l(\mod n)} \oplus S_{i+l(\mod n)}\oplus S_{j+k(\mod n)+n}$
 if $i,j,k,l\in I_2-I_1$;
 \item $n$ is even and $S_{i,j}\otimes S_{k,l}\cong S_{i+k}\oplus S_{j+l}\oplus S_{i+l(\mod n)}\oplus S_{j+k(\mod n)+n}$ if $i,j,k,l\in I_1\cap I_2$.
\end{enumerate}

\end{prop}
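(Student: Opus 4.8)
The plan is to assemble Proposition \ref{prop3-6} directly from the four lemmas that precede it, treating it as a bookkeeping result rather than a new computation. First I would observe that the hypotheses of Lemmas \ref{lem3-4}, \ref{lem3-5} (first version), \ref{lem3-5} (second version), and \ref{lem3-6} are mutually exclusive and exhaustive: given a pair $S_{i,j}$, $S_{k,l}$, exactly one of the four combinations of the two congruences $i+k\equiv j+l\,(\mod n)$ and $i+l\equiv j+k\,(\mod n)$ holds. In the notation of the proposition, these four cases are precisely $(i,j,k,l)\notin I_1\cup I_2$, $(i,j,k,l)\in I_1\setminus I_2$, $(i,j,k,l)\in I_2\setminus I_1$, and $(i,j,k,l)\in I_1\cap I_2$. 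So the logical skeleton is a four-way case split indexed by membership in $I_1$ and $I_2$.

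Next, for each of the four cases I would simply cite the matching lemma: case (1) is Lemma \ref{lem3-4}, case (2) is the second Lemma labelled \ref{lem3-5} (the one with $S_{i+k}\oplus S_{j+l}\oplus S_{i+l(\mod n),j+k(\mod n)}$), case (3) is the first Lemma labelled \ref{lem3-5}, and case (4) is Lemma \ref{lem3-6}. The only genuine content beyond citation is the assertion in case (4) that $n$ must be even; this is exactly the observation made just before Lemma \ref{lem3-6}, namely that $i+k\equiv j+l$ and $i+l\equiv j+k$ together force $2(i-j)\equiv 2(k-l)\equiv 0\,(\mod n)$ with $i\neq j$ and $k\neq l$ in the range $0\le i<j<n$, $0\le k<l<n$, hence $j=i+\tfrac n2$ and $l=k+\tfrac n2$, which requires $2\mid n$.

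The main obstacle, such as it is, is not mathematical depth but consistency of indexing and labels: the excerpt reuses the label \ref{lem3-5} for two different lemmas, so I would be careful to refer to them unambiguously (for instance by their conclusions or by the congruence hypotheses rather than by number), and I would double-check that the summand notation $S_{a,b}$ versus $S_{a(\mod n),b(\mod n)}$ matches what each lemma actually produces, since in cases (2) and (3) one pair of indices collapses to a sum $S_{i+k}$, $S_{j+l}$ (with no reduction mod $n$, because in those cases $i+k$ and $j+l$ land in the range giving a well-defined one-dimensional module index in $\mathbb{Z}_{2n}$) while the other pair stays two-dimensional. A short remark reconciling the lemma statements with the proposition's uniform notation, plus the parity argument for case (4), completes the proof.

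\begin{proof}
Fix $0\le i<j\le n-1$ and $0\le k<l\le n-1$. The two conditions
$$i+k\equiv j+l\ (\mod n),\qquad i+l\equiv j+k\ (\mod n)$$
are independent, so exactly one of the following holds: neither is satisfied, i.e. $(i,j,k,l)\notin I_1\cup I_2$; only the first, i.e. $(i,j,k,l)\in I_1\setminus I_2$; only the second, i.e. $(i,j,k,l)\in I_2\setminus I_1$; or both, i.e. $(i,j,k,l)\in I_1\cap I_2$. This gives the four cases of the statement.

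In the first case the hypotheses of Lemma \ref{lem3-4} are met, yielding
$$S_{i,j}\otimes S_{k,l}\cong S_{i+k(\mod n),j+l(\mod n)}\oplus S_{i+l(\mod n),j+k(\mod n)}.$$
In the case $(i,j,k,l)\in I_1\setminus I_2$ we have $i+k\equiv j+l\,(\mod n)$ and $i+l\neq j+k\,(\mod n)$, so by the Lemma giving $S_{i,j}\otimes S_{k,l}\cong S_{i+k}\oplus S_{j+l}\oplus S_{i+l(\mod n),j+k(\mod n)}$ we obtain exactly the summands in (2). In the case $(i,j,k,l)\in I_2\setminus I_1$ we have $i+l\equiv j+k\,(\mod n)$ and $i+k\neq j+l\,(\mod n)$, and the Lemma giving $S_{i,j}\otimes S_{k,l}\cong S_{i+k(\mod n),j+l(\mod n)}\oplus S_{i+l(\mod n)}\oplus S_{j+k(\mod n)+n}$ yields (3).

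Finally suppose $(i,j,k,l)\in I_1\cap I_2$. Adding and subtracting the two congruences gives $2(i-j)\equiv 0\,(\mod n)$ and $2(k-l)\equiv 0\,(\mod n)$. Since $0\le i<j\le n-1$ and $0\le k<l\le n-1$, the only possibility is $j-i=\tfrac n2$ and $l-k=\tfrac n2$, which forces $n$ to be even. Under these hypotheses Lemma \ref{lem3-6} applies and gives
$$S_{i,j}\otimes S_{k,l}\cong S_{i+k}\oplus S_{j+l}\oplus S_{i+l(\mod n)}\oplus S_{j+k(\mod n)+n},$$
which is (4). This exhausts all cases and completes the proof.
\end{proof}
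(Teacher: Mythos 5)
Your proof is correct and follows essentially the same route as the paper, which states Proposition \ref{prop3-6} as a direct summary of Lemmas \ref{lem3-4}--\ref{lem3-6} via the same four-way case split on the two congruences; your parity argument for case (4) matches the remark the paper makes just before Lemma \ref{lem3-6}. Nothing further is needed.
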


\section{Grothendieck ring of $H_{2n^2}$}
\label{sect-4}
Let $H$ be a finite dimensional Hopf algebra and $F(H)$ the free abelian group generated by the isomorphism classes $[M]$ of finite dimensional $H$-modules $M$.
The abelian group $F(H)$ becomes a ring if we endow $F(H)$ with a multiplication given by the tensor product $[M][N]=[M\otimes N]$. The Green ring(or representation ring) $r(H)$ of the Hopf algebra $H$ is defined to be the quotient ring of $F(H)$ modulo the relations $[M \oplus N ] = [M] + [N ]$. It follows that the Green ring $r(H)$ is an associative ring with identity given by
$[k_\varepsilon]$, the trivial 1-dimensional $H$-module. Note that $r(H)$ has a $\mathbb{Z}$-basis consisting of isomorphism classes of finite dimensional indecomposable $H$-modules.

The Grothendieck ring $G_{0}(H)$ of $H$ is the quotient ring of $F(H)$ modulo short exact sequences of $H$-modules, i.e., $[Y] =[X] +[Z]$ if $0 \rightarrow X\rightarrow Y\rightarrow Z\rightarrow 0$ is exact. The Grothendieck ring $G_{0}(H)$ possesses a basis given by isomorphism classes of simple $H$-modules. Particularly, if $H$ is a finite dimensional semi-simple Hopf algebra, then the Green ring $r(H)$
is equal to the Grothendieck ring $G_{0}(H)$ and is semi-simple(see \cite{LO}, \cite{WI}).

In this section we will describe the Grothendieck ring $r(H_{2n^2})$ of the Hopf algebra $H_{2n^2}$ explicitly by the generators and the generating relations.

By Proposition \ref{prop1-2}, we have $M\otimes N\cong N\otimes M$  for any finite dimensional $H_{2n^2}$-modules $M, N$. Therefore the Grothendieck ring $r(H_{2n^2})$ is commutative. Furthermore, $r(H_{2n^2})$ is
semisimple since $H_{2n^2}$ is a semisimple.

Let $F_{t}(y,z)$ be the generalized Fibonacci polynomials defined by
$$F_{t+2}(y,z)=zF_{t+1}(y,z)-yF_{t}(y,z)$$ for $t \geq 1$, while $F_{0}(y,z)=0, F_{1}(y,z)=1, F_{2}(y,z)=z$.  These generalized Fibonacci polynomials appeared in \cite{COZ} and \cite{LZ}.
\begin{lem}\label{lem4-2} \cite [Lemma 3.11]{COZ} For $t \geq 2$ have $$F_{t}(y,z)=\sum_{i=0}^{\left[\frac{t-1}{2}\right]}(-1)^{i} \left({t-1-i\atop i}\right)y^{i}z^{t-1-2i}$$
where $\left[\frac{t-1}{2}\right]$ denotes the biggest integer which is not bigger than $\frac{t-1}{2}$.
\end{lem}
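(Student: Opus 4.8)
The plan is to prove the identity by induction on $t$, using the second-order recursion $F_{t+2}(y,z)=zF_{t+1}(y,z)-yF_t(y,z)$. Because the recursion reaches back two steps, I would verify two base cases. For $t=2$ the right-hand side is $\sum_{i=0}^{0}(-1)^i\binom{1-i}{i}y^iz^{1-2i}=z=F_2(y,z)$, and for $t=3$ it is $\binom{2}{0}z^2-\binom{1}{1}y=z^2-y=zF_2-yF_1=F_3(y,z)$; so both hold.

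For the inductive step, assume the formula holds for $t$ and $t+1$ with $t\geq 2$, and compute $F_{t+2}=zF_{t+1}-yF_t$ by substituting the two closed forms. Multiplying the expansion of $F_{t+1}$ by $z$ raises each $z$-power by one while leaving the $y$-powers fixed, producing the terms $(-1)^i\binom{t-i}{i}y^iz^{t+1-2i}$. In $yF_t$ the $y$-exponent increases by one, and after the reindexing $i\mapsto i-1$ the summand of $-yF_t$ contributes $(-1)^i\binom{t-i}{i-1}y^iz^{t+1-2i}$. Adding, the coefficient of $y^iz^{t+1-2i}$ becomes $(-1)^i\big(\binom{t-i}{i}+\binom{t-i}{i-1}\big)=(-1)^i\binom{t+1-i}{i}$ by Pascal's rule, which is exactly the claimed coefficient, and the $i=0$ contribution is $1=\binom{t+1}{0}$ (using $\binom{t}{-1}=0$). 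This is the assertion for $t+2$, completing the induction.

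The only point needing genuine care — and the step I expect to be the main (minor) obstacle — is the bookkeeping of the summation limits: the sum coming from $zF_{t+1}$ runs over $0\leq i\leq [t/2]$, while the one from $-yF_t$, after reindexing, runs over $1\leq i\leq [(t-1)/2]+1$, and these ranges need not coincide. I would check that at the top of each range the ``absent'' binomial coefficient vanishes, so that Pascal's rule still yields $\binom{t+1-i}{i}$ and the combined sum runs precisely over $0\leq i\leq [(t+1)/2]$; distinguishing the cases $t$ even and $t$ odd makes this transparent. Alternatively, one may simply invoke \cite[Lemma 3.11]{COZ} directly, since the statement concerns only the polynomials $F_t$ and is independent of anything else in this paper; but the short induction above is entirely self-contained.
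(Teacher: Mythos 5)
Your induction is correct, and the bookkeeping issue you flag is handled properly: in both parity cases the ``missing'' binomial coefficient at the top of the shorter range vanishes (e.g.\ $\binom{s}{s+1}=0$ when $t=2s+1$), so Pascal's rule produces exactly the coefficients $(-1)^i\binom{t+1-i}{i}$ over the full range $0\leq i\leq\left[\frac{t+1}{2}\right]$. The comparison with the paper is somewhat moot here: the paper gives no proof at all of this lemma, simply quoting it as Lemma~3.11 of the cited reference of Chen, Van Oystaeyen and Zhang on the Green rings of Taft algebras, which is the second alternative you mention at the end. Your self-contained two-step induction (two base cases $t=2,3$ because the recursion has depth two, then $zF_{t+1}-yF_t$ combined via Pascal's rule after the shift $i\mapsto i-1$) is the standard argument for such generalized Fibonacci/Chebyshev-type expansions and is what one would expect to find in the cited source; it buys the reader a verification without having to consult that reference, at the cost of a few lines. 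No gaps.
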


 Let $a=[S_1]$, $b=[S_{n+1}]$ and $c=[S_{0,1}]$. By Lemma \ref{lem3-2} and Proposition \ref{prop3-6}, we have
\begin{lem} \label{lem4-3} The following statements hold in $r(H_{2n^2})$ if $n$ is odd.
\begin{enumerate}
  \item  For all $i\in \mathbb{Z}_{2n}$, we have
$$b^i=
  \left\{
    \begin{array}{ll}
     \left\{\begin{array}{ll}
                [S_{n+i}],\quad i<n;\\

               [S_{i}] , \quad i\geq n,
\end{array}
 \right., & i \hbox{ is odd;} \\
\\
     \left\{\begin{array}{ll}
                [S_{i}],\quad i<n;\\

                [S_{i-n}], \quad i>n; \\

               [S_{0}]=1 , \quad i=2n;
\end{array}
 \right., &  i \hbox{ is even.}
    \end{array}
  \right.$$

  \item $[S_{0,i+1}]$= $\left\{\begin{array}{ll}
                c^{2}+b^{n+1}+b,\quad i=1;\\

                c[S_{0,i}]-b[S_{0,i-1}], \quad 1< i\leq n-2;
\end{array}
 \right.$
 \item For all $1 \leq i < n$,
$$[S_{0,i}]b^j= \left\{\begin{array}{ll}
                [S_{j,i+j}],\quad i+j<n;\\

                [S_{0,j}], \quad i+j=n;
\end{array}
 \right.$$ and $[S_{0,i}]b^n=[S_{0,i}]$;
 \item $[S_{0,i}][S_{0,n-i}]=1+b^n+b^i[S_{0,n-2i}]$ for all $1\leq i\leq \frac{n-1}{2}$.
\end{enumerate}
\end{lem}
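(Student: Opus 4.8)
The plan is to establish the four identities by direct computation in the Grothendieck ring, using the tensor product decompositions from Section 3 together with the facts that $a = [S_1]$, $b = [S_{n+1}]$, $c = [S_{0,1}]$ and that (since $n$ is odd) every simple module is a class already named among the $S_m$ and $S_{i,j}$.

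\textbf{Part (1).} First I would compute $b^i$ by induction on $i$, using Proposition \ref{lem3-2}. Since $b = [S_{n+1}]$ with $n+1$ even (as $n$ is odd), the relevant cases of Proposition \ref{lem3-2} are parts (2)--(5): multiplying $[S_{n+1}]$ repeatedly, one tracks the exponent $i(n+1) = in + i$ modulo the appropriate range and reads off whether the product lands in $\{S_m : 0 \le m \le n-1\}$ or $\{S_m : n \le m \le 2n-1\}$. The parity of $i$ governs whether $in$ contributes an extra $n$ (shifting between the two halves), which produces exactly the stated dichotomy; the case $i = 2n$ gives $b^{2n} = [S_0] = 1$, consistent with $\sigma$ and the $q^{m^2/2}$ formula since $q$ is a primitive $n$-th root. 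I would present this as a short induction rather than grinding every subcase.

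\textbf{Parts (2) and (3).} For (3), I would apply Proposition \ref{lem3-3}: $[S_{0,i}]\,b^j = [S_{0,i} \otimes S_{n+1}^{\otimes j}] = [S_{0,i} \otimes S_{j(n+1)}]$, and then use that $S_m \otimes S_{i,j} \cong S_{m+i(\bmod n),\, j+m(\bmod n)}$ from Proposition \ref{lem3-3}. When $i + j = n$ the two indices become $\{0, j\}$ (after reduction mod $n$ and reordering), giving $[S_{0,j}]$; when $i + j < n$ no reduction occurs and we get $[S_{j, i+j}]$; and $b^n = 1$ by (1) gives the last claim. For (2), the case $i = 1$ is $[S_{0,1}]^2 = c^2$, which I would compute via Proposition \ref{prop3-6}: here $i=k=0$, $j=l=1$, so $i+k = 0 \equiv j+l = 2 \pmod n$ forces $n \mid 2$, impossible for odd $n > 1$, hence we are in case (1) or (3). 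One checks $i+l = 1 \ne j+k = 1$ is false — actually $i+l = 1 = j+k$, so we land in case (3) of Proposition \ref{prop3-6}, giving $S_{0,1} \otimes S_{0,1} \cong S_0 \oplus S_2 \oplus S_{1 + n}$ (reading indices mod $n$ and mod $2n$ as appropriate), and I would then rewrite $[S_0] = 1$, $[S_2] = b^{?}$ via (1), and $[S_{1+n}] = b$ to get $c^2 = 1 + b^{n+1} + b$ after re-expressing $[S_2]$ using part (1) (note $[S_2] = b^{n+1}$ when one traces the even-$i$ branch). The recursion $[S_{0,i+1}] = c[S_{0,i}] - b[S_{0,i-1}]$ for $i \le n-2$ follows from $[S_{0,i} \otimes S_{0,1}]$ via Proposition \ref{prop3-6}(1) (the generic case, valid precisely when the congruence obstructions fail, which holds in this range for odd $n$), giving $S_{0,i} \otimes S_{0,1} \cong S_{0,i+1} \oplus S_{1,i}$, and then recognizing $[S_{1,i}] = b[S_{0,i-1}]$ by part (3).

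\textbf{Part (4).} I would compute $[S_{0,i}][S_{0,n-i}] = [S_{0,i} \otimes S_{0,n-i}]$ using Proposition \ref{prop3-6} with $(i,j,k,l) = (0, i, 0, n-i)$ wait — more precisely with first module $S_{0,i}$ and second $S_{0,n-i}$, so the four indices are $0, i$ and $0, n-i$; then $0 + 0 = 0$ and $i + (n-i) = n \equiv 0$, so the first congruence holds ($I_1$), while $0 + (n-i) = n-i$ and $i + 0 = i$ are congruent mod $n$ iff $n \mid n - 2i$ iff $n \mid 2i$, which for $1 \le i \le \frac{n-1}{2}$ and odd $n$ fails. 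Hence we are in case (2) of Proposition \ref{prop3-6}: $S_{0,i} \otimes S_{0,n-i} \cong S_{0} \oplus S_{i + (n-i)} \oplus S_{(n-i)(\bmod n), i(\bmod n)} = S_0 \oplus S_n \oplus S_{i, n-i}$ (after reordering indices), giving $1 + [S_n] + [S_{i,n-i}]$. Then $[S_n] = b^n$ by part (1), and $[S_{i, n-i}] = b^i[S_{0, n-2i}]$ by part (3) with $j = i$ (since $i + (n-2i) = n - i < n$ when $i \ge 1$). This yields the claimed formula.

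\textbf{Main obstacle.} The delicate point throughout is bookkeeping: correctly reducing indices modulo $n$ versus modulo $2n$, correctly identifying which case of Proposition \ref{prop3-6} applies (i.e.\ checking the congruence conditions defining $I_1, I_2$ genuinely fail or hold for odd $n$ in each stated range), and matching the resulting one-dimensional summands $[S_m]$ to the correct power $b^i$ via part (1)'s parity-dependent formula. The hardest single step is probably part (2)'s identification of $c^2$, because it requires simultaneously invoking Proposition \ref{prop3-6}, re-expressing $[S_2]$ through the even branch of part (1) as $b^{n+1}$, and confirming the recursion's validity range $1 < i \le n-2$ is exactly where the generic case of Proposition \ref{prop3-6} holds.
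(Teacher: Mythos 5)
Your overall strategy is exactly the paper's: the paper offers no separate proof of this lemma beyond "By Lemma \ref{lem3-2} and Proposition \ref{prop3-6}", so reducing everything to the tensor-product decompositions of Section 2 (together with Proposition \ref{lem3-3}) is the intended argument, and your parts (1) and (4) and the recursion in (2) are carried out correctly. However, there are two concrete errors in the execution.

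First, the $i=1$ case of part (2) is wrong. With $(i,j,k,l)=(0,1,0,1)$ you correctly land in case (3) of Proposition \ref{prop3-6}, but that case gives
$S_{0,1}\otimes S_{0,1}\cong S_{0,2}\oplus S_{1}\oplus S_{n+1}$,
where the first summand is the \emph{two-dimensional} module $S_{0,2}$; your decomposition $S_0\oplus S_2\oplus S_{1+n}$ has total dimension $3$, not $4$, so it cannot be right. Consequently the identity to extract is $c^2=[S_{0,2}]+[S_1]+[S_{n+1}]=[S_{0,2}]+b^{n+1}+b$ (using $[S_1]=b^{n+1}$ from the even, $i>n$ branch of part (1) --- note $[S_2]=b^2$, not $b^{n+1}$ as you wrote), i.e.\ $[S_{0,2}]=c^2-b^{n+1}-b$. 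Your conclusion $c^2=1+b^{n+1}+b$ is false. (The lemma as printed says $c^2+b^{n+1}+b$; comparing with the proof of Proposition \ref{prop4-5}, where $[S_{0,2}]=c^2-b^{n+1}-b$ is used, the signs in the statement are a typo, but your formula matches neither version.)

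Second, in part (3) you justify $[S_{0,i}]b^n=[S_{0,i}]$ by "$b^n=1$ by (1)". For odd $n$ this is false: part (1) (odd branch, $i\geq n$) gives $b^n=[S_n]\neq[S_0]$, and indeed $b$ has order $2n$, which is why Theorem \ref{thm4-7} needs the separate relation $zy^n-z$ alongside $y^{2n}-1$. The identity $[S_{0,i}]b^n=[S_{0,i}]$ is still true, but because $S_n\otimes S_{0,i}\cong S_{n+0(\bmod n),\,i+n(\bmod n)}=S_{0,i}$ by Proposition \ref{lem3-3}, not because $b^n$ is the identity. This also contradicts your own (correct) use of $[S_n]=b^n$ in part (4).
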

In particular, we have $cb^i=[S_{i,i+1}]$ for $0\leq i\leq n-2$, $cb^{n-1}=[S_{0,n-1}]$, and $cb^n=c.$

\begin{lem} \label{lem4-4} The following statements hold in $r(H_{2n^2})$ if $n$ is even.
\begin{enumerate}
  \item $a^i=[S_i]$, for all $1\leq i\leq n-1$, and $a^n=1$.
\item $b^i$= $\left\{\begin{array}{ll}
                [S_{n+i}],\quad 0<i<n, i=2k+1;\\

                [S_{i}], \quad 0<i<n, i=2k; \\

               1 , \quad i=n.
\end{array}
 \right.$
\item $a^{i}b=[S_{n+i+1}]$ for $0\leq i<n-1$ and $a^{n-1}b=[S_n]$.
  \item $cb^i=ca^i=[S_{i,i+1}]$ for $0\leq i\leq n-2$, $cb^{n-1}=ca^{n-1}=[S_{0,n-1}]$.
  \item $[S_{0,i+1}]$= $\left\{\begin{array}{ll}
                c^{2}-a-b,\quad i=1;\\

                c[S_{0,i}]-b[S_{0,i-1}], \quad 1< i\leq n-2;
\end{array}
 \right.$
 \item For all $1 \leq i < n$,
$[S_{0,i}]b^j$= $\left\{\begin{array}{ll}
                [S_{j,i+j}],\quad i+j<n;\\

                [S_{0,j}], \quad i+j=n;
\end{array}
 \right.$
 \item $[S_{0,i}][S_{0,n-i}]=1+a^{n-1}b+b^i[S_{0,n-2i}]$ for all $1\leq i\leq \frac{n-1}{2}$.
 \item $[S_{0,1}]^2=1+ab+a+b$ for $n=2$.
\end{enumerate}
\end{lem}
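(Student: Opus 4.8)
The plan is to derive every identity of Lemma~\ref{lem4-4} from the tensor-product decompositions established in Section~\ref{sect-3} — Proposition~\ref{lem3-2} for products of one-dimensional modules, Proposition~\ref{lem3-3} for a one-dimensional times a two-dimensional module, and Proposition~\ref{prop3-6} for a product of two two-dimensional modules — by passing to the Grothendieck ring, where a decomposition $M\otimes N\cong\bigoplus_s M_s$ becomes $[M][N]=\sum_s[M_s]$. Since $r(H_{2n^2})$ is commutative and semisimple, it suffices to check each identity on the basis $\{[S_m],[S_{i,j}]\}$. The identities will be proved in a fixed order dictated by their interdependence: first the ``monomial'' formulas (1)--(4) and (6), which are immediate consequences of the above propositions, and only then the ``quadratic'' formulas (5), (7), (8), whose proofs reuse (3) and (6).

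For (1), induction on $i$ via Proposition~\ref{lem3-2}(1) gives $a^i=[S_1]^i=[S_i]$ for $i\le n-1$ and then $a^n=[S_{n-1}][S_1]=[S_0]=1$. For (2), Proposition~\ref{lem3-2}(4) yields $b^2=[S_{n+1}]^2=[S_2]$ (for $n=2$ one uses part (5) instead, giving $b^2=[S_0]=1$), and then the induction $b^{i+1}=b^i b$ propagates the alternating pattern: multiplying $[S_i]$ (even $i$) by $[S_{n+1}]$ has label sum in $[n,2n-1]$, giving $[S_{n+i+1}]$ by Proposition~\ref{lem3-2}(2), while multiplying $[S_{n+i}]$ (odd $i<n-1$) by $[S_{n+1}]$ has label sum in $[2n,3n-1]$, giving $[S_{i+1}]$ by Proposition~\ref{lem3-2}(4); the final step $b^{n-1}b=[S_{2n-1}][S_{n+1}]$ has label sum $3n$, so Proposition~\ref{lem3-2}(5) gives $b^n=[S_0]=1$. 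For (3), $a^i b=[S_i][S_{n+1}]$ has label sum $n+i+1$, which lies in $[n,2n-1]$ for $i\le n-2$, giving $[S_{n+i+1}]$ by Proposition~\ref{lem3-2}(2), whereas for $i=n-1$ the sum equals $2n$ and Proposition~\ref{lem3-2}(3) gives $[S_{2n-n}]=[S_n]$. For (4), Proposition~\ref{lem3-3} gives $[S_m][S_{0,1}]=[S_{m\bmod n,\,(1+m)\bmod n}]$; since $b^i$, $a^i$ and $a^{n-1}$ represent classes $[S_m]$ with $m\equiv i,\,i,\,n-1\pmod n$, this equals $[S_{i,i+1}]$ for $i\le n-2$ and $[S_{n-1,0}]=[S_{0,n-1}]$ for $i=n-1$.

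For (6), Proposition~\ref{lem3-3} gives $[S_{0,i}]b^j=[S_{0,i}\otimes S_m]=[S_{j\bmod n,\,(i+j)\bmod n}]$, which is $[S_{j,i+j}]$ when $i+j<n$ and $[S_{j,0}]=[S_{0,j}]$ when $i+j=n$. Now (5): for $i=1$ and $n>2$ the pair $(0,1),(0,1)$ lies in $I_2\setminus I_1$, so Proposition~\ref{prop3-6}(3) gives $c^2=[S_{0,2}]+[S_1]+[S_{n+1}]=[S_{0,2}]+a+b$, i.e.\ $[S_{0,2}]=c^2-a-b$; for $n=2$ this pair lies in $I_1\cap I_2$, and Proposition~\ref{prop3-6}(4) gives $c^2=[S_0]+[S_2]+[S_1]+[S_3]$, which is $1+ab+a+b$ once (3) is used to rewrite $[S_2]=a^{n-1}b=ab$ — this is identity (8). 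For $1<i\le n-2$ the pair $(0,1),(0,i)$ lies outside $I_1\cup I_2$, so Proposition~\ref{prop3-6}(1) gives $c\,[S_{0,i}]=[S_{0,i+1}]+[S_{1,i}]$; applying (6) with $j=1$ to $[S_{0,i-1}]$ gives $b\,[S_{0,i-1}]=[S_{1,i}]$, whence $[S_{0,i+1}]=c\,[S_{0,i}]-b\,[S_{0,i-1}]$, the generalized Fibonacci recursion of Lemma~\ref{lem4-2}. Finally, for (7) with $1\le i\le\lfloor(n-1)/2\rfloor$, the pair $(0,i),(0,n-i)$ lies in $I_1\setminus I_2$, so Proposition~\ref{prop3-6}(2) gives $[S_{0,i}][S_{0,n-i}]=[S_0]+[S_n]+[S_{i,\,n-i}]$; rewriting $[S_n]=a^{n-1}b$ by (3) and $[S_{i,n-i}]=[S_{0,n-2i}]b^i$ by (6) yields $1+a^{n-1}b+b^i[S_{0,n-2i}]$.

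The individual computations are routine; the delicate part is purely bookkeeping. Propositions~\ref{lem3-2} and~\ref{prop3-6} are stated as case splits determined by which residue interval a sum of labels falls into (respectively by membership in $I_1$, $I_2$), so at every step one must check parities and locate $m+m'$ modulo $n$ or $2n$; together with the repeated need to apply the reordering $[S_{a,b}]=[S_{b,a}]$ whenever a product throws up a label pair in the wrong order (as in $c\,[S_{0,i}]$, which produces $[S_{i,1}]=[S_{1,i}]$), this is where the argument is most error-prone. One must also keep to the dependency order to avoid circular reasoning: (6) is proved directly and then used in (5) and (7), never the other way round, and the identification $[S_n]=a^{n-1}b$ from (3) is precisely what lets the degenerate four-summand products be written uniformly in $a$ and $b$.
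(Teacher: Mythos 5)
Your proposal is correct and takes essentially the same route as the paper, which states Lemma \ref{lem4-4} without further argument as a direct consequence of the tensor-product decompositions in Propositions \ref{lem3-2}, \ref{lem3-3} and \ref{prop3-6}; you have simply carried out the case-by-case bookkeeping (residue intervals, membership in $I_1$, $I_2$, and the reordering $[S_{j,i}]=[S_{i,j}]$) that the paper leaves implicit. The dependency order you impose, proving (1)--(4) and (6) first and then using them in (5), (7), (8), is exactly what is needed and all the individual computations check out.
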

\begin{prop}\label{prop4-5} Suppose that $n$ is odd, we have
\begin{eqnarray*}
 [S_{0,m+2}]
&=&\sum_{i=0}^{\left[\frac{m+2}{2}\right]}(-1)^{i} \left({m+2-i\atop i}\right)b^{i}c^{m+2-2i}
-\sum_{i=0}^{\left[\frac{m}{2}\right]}(-1)^{i} \left({m-i\atop i}\right)b^{n+1+i}c^{m-2i}. \quad \quad \quad (3.4)
\end{eqnarray*}
for $0<m<n-2$.
\end{prop}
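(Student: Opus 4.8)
The plan is to reduce the whole statement to the generalized Fibonacci polynomials and then invoke Lemma \ref{lem4-2}. The first step is to prove, by induction on $j$, the closed formula
\[
[S_{0,j}]=F_{j+1}(b,c)-b^{n+1}F_{j-1}(b,c)\qquad(1\le j\le n-1);
\]
once this is available, the assertion (3.4) is obtained by setting $j=m+2$ and expanding $F_{m+3}(b,c)$ and $F_{m+1}(b,c)$ via Lemma \ref{lem4-2}.

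For the induction I would use two facts already at hand: the recursion $[S_{0,i+1}]=c\,[S_{0,i}]-b\,[S_{0,i-1}]$, valid for $1<i\le n-2$ by Lemma \ref{lem4-3}(2), and the defining recursion $F_{t+2}(y,z)=zF_{t+1}(y,z)-yF_{t}(y,z)$ of the Fibonacci polynomials. Since the recursion is second order I would check two base cases. For $j=1$: by definition $[S_{0,1}]=c$, and $F_{2}(b,c)-b^{n+1}F_{0}(b,c)=c-0=c$ since $F_{0}=0$, $F_{2}(y,z)=z$. For $j=2$: one has $[S_{0,2}]=c^{2}-b-b^{n+1}$ (Lemma \ref{lem4-3}(2)), which equals $F_{3}(b,c)-b^{n+1}F_{1}(b,c)=(c^{2}-b)-b^{n+1}$. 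For the inductive step, assume the formula holds for $j-1$ and $j$ with $2\le j\le n-2$; then, using Lemma \ref{lem4-3}(2) with $i=j$,
\begin{eqnarray*}
[S_{0,j+1}]&=&c\,[S_{0,j}]-b\,[S_{0,j-1}]\\
&=&c\big(F_{j+1}(b,c)-b^{n+1}F_{j-1}(b,c)\big)-b\big(F_{j}(b,c)-b^{n+1}F_{j-2}(b,c)\big)\\
&=&\big(cF_{j+1}(b,c)-bF_{j}(b,c)\big)-b^{n+1}\big(cF_{j-1}(b,c)-bF_{j-2}(b,c)\big)\\
&=&F_{j+2}(b,c)-b^{n+1}F_{j}(b,c),
\end{eqnarray*}
where I use that $r(H_{2n^2})$ is commutative (Proposition \ref{prop1-2}) to factor out $b^{n+1}$ and then apply the Fibonacci recursion twice. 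As $i=j$ ranges over $2,\dots,n-2$, the step produces the formula for $j+1=3,\dots,n-1$, completing the induction.

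Finally I would put $j=m+2$ and substitute the explicit expansion of Lemma \ref{lem4-2} with $y=b$, $z=c$: taking $t=m+3$ gives $F_{m+3}(b,c)=\sum_{i=0}^{[\frac{m+2}{2}]}(-1)^{i}\binom{m+2-i}{i}b^{i}c^{m+2-2i}$, and taking $t=m+1$ gives $F_{m+1}(b,c)=\sum_{i=0}^{[\frac{m}{2}]}(-1)^{i}\binom{m-i}{i}b^{i}c^{m-2i}$; multiplying the second sum by $b^{n+1}$ turns $b^{i}$ into $b^{n+1+i}$ in every term, and subtracting it from the first yields exactly (3.4). The hypothesis $0<m<n-2$ is nothing but $3\le m+2\le n-1$, i.e.\ the range for which the closed formula has been established.

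I do not expect a genuine obstacle. The only points needing care are: running a true two-term induction so that the inductive step invokes Lemma \ref{lem4-3}(2) only inside its valid range $1<i\le n-2$; keeping track of the index shift between the Fibonacci subscript $t$ in Lemma \ref{lem4-2} (where $t-1$ is the top summation index) and the label of $S_{0,j}$, so that the first sum in (3.4) comes from $F_{m+3}$ and the second from $F_{m+1}$; and treating $b^{n+1}$ as a scalar factor by commutativity of $r(H_{2n^2})$. Everything else is routine reindexing, so once the closed formula is in place the proposition is immediate.
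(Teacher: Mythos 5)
Your proposal is correct and follows essentially the same route as the paper: a two-term induction on the recursion $[S_{0,i+1}]=c[S_{0,i}]-b[S_{0,i-1}]$ yielding the closed form $[S_{0,m+2}]=F_{m+3}(b,c)-b^{n+1}F_{m+1}(b,c)$, followed by the expansion of Lemma~\ref{lem4-2}. Your only (harmless) deviations are phrasing the induction via the intermediate formula $[S_{0,j}]=F_{j+1}(b,c)-b^{n+1}F_{j-1}(b,c)$ and silently correcting the sign in Lemma~\ref{lem4-3}(2) to $[S_{0,2}]=c^{2}-b^{n+1}-b$, exactly as the paper's own proof does.
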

\begin{proof}
The result is proved by induction.
Note that
$$[S_{0,1}]=c, \ [S_{0,2}]=c^{2}-b^{n+1}-b,$$
 and
$$[S_{0,i+1}]=c[S_{0,i}]-b[S_{0,i-1}]$$
for $1<i<n-1$.
Thus
$$[S_{0,3}]=c[S_{0,2}]-b[S_{0,1}]=c^{3}-cb^{n+1}-bc-bc=c^{3}-3bc.$$
This equals to the right hand side of  $(3.4)$  for $m=1$. 
Hence $(3.4)$ holds for $m=0, 1$.

Now suppose that $(3.4)$ holds for $m, m+1$, then for $m+2$ we have
\begin{eqnarray*}
[S_{0,m+2}]&=&c[S_{0,m+1}]-b[S_{0,m}]\\
&=&c\bigg(F_{m+2}(b,c)-b^{n+1}F_{m}(b,c)\bigg)-b\bigg(F_{m+1}(b,c)-b^{n+1}F_{m-1}(b,c)\bigg)\\
&=&\bigg(cF_{m+2}(b,c)-bF_{m+1}(b,c)\bigg)-b^{n+1}\bigg(cF_{m}(b,c)-bF_{m-1}(b,c)\bigg)\\
&=&F_{m+3}(b,c)-b^{n+1}F_{m+1}(b,c)
\end{eqnarray*}
The proof is finished by Lemma \ref{lem4-2}.
\end{proof}
\begin{prop}\label{prop4-6} Suppose that $n$ is even, we have
\begin{eqnarray*}
 [S_{0,m+2}]
=\sum_{i=0}^{\left[\frac{m+2}{2}\right]}(-1)^{i} \left({m+2-i\atop i}\right)b^{i}c^{m+2-2i}
-a\sum_{i=0}^{\left[\frac{m}{2}\right]}(-1)^{i} \left({m-i\atop i}\right)b^{i}c^{m-2i}. \quad \quad (3.5)
\end{eqnarray*}
for $0<m<n-2$.
\end{prop}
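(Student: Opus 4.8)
The plan is to mimic the proof of Proposition~\ref{prop4-5} almost verbatim, replacing the recursion base cases and the ``twist'' term by the ones coming from Lemma~\ref{lem4-4}. Recall that in the odd case the extra summand was $b^{n+1}$, which is exactly $a^{n-1}b$ in the even notation; the difference in the even case is that Lemma~\ref{lem4-4}(5) gives $[S_{0,2}]=c^2-a-b$, i.e. the subtracted term is $a+b$ rather than $b^{n+1}+b$. Since $a^i b = [S_{n+i+1}]$ for $0\le i<n-1$ (Lemma~\ref{lem4-4}(3)), one has $ab = [S_{n+2}] = b^2$ in the relevant range, so $a+b$ can be rewritten, but the cleanest path is to keep the factor $a$ explicit, as the statement does: the claim $(3.5)$ says $[S_{0,m+2}] = F_{m+2}(b,c) - a\,F_{m}(b,c)$, once we recognize the two sums as generalized Fibonacci polynomials via Lemma~\ref{lem4-2}.

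First I would verify the base cases $m=0$ and $m=1$ directly. For $m=0$: the right side is $F_2(b,c) - a\,F_0(b,c) = c^2 - 0 = c^2$? No --- here one must be careful: $F_2(b,c)=c$, so $[S_{0,2}]$ should be read from the formula with the understanding that the indexing matches $[S_{0,m+2}]$ against $F_{m+2}$ and $aF_m$. For $m=0$ this reads $[S_{0,2}] = (c^2 - b) - a\cdot 1 = c^2 - a - b$, which is exactly Lemma~\ref{lem4-4}(5), using that $\binom{2}{0}=1$, $\binom{1}{1}=1$ give the two terms $c^2$ and $-b$ of $F_2$... wait, $F_2(b,c)=c$, not $c^2-b$. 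The resolution is that the sum $\sum_{i=0}^{[(m+2)/2]}(-1)^i\binom{m+2-i}{i}b^i c^{m+2-2i}$ is precisely $F_{m+3}(b,c)$ by Lemma~\ref{lem4-2} (with $t=m+3$), not $F_{m+2}$; likewise $\sum_{i=0}^{[m/2]}(-1)^i\binom{m-i}{i}b^ic^{m-2i} = F_{m+1}(b,c)$. So $(3.5)$ asserts $[S_{0,m+2}] = F_{m+3}(b,c) - a\,F_{m+1}(b,c)$. With this reading, $m=0$ gives $F_3 - aF_1 = (c^2-b) - a = c^2-a-b$, matching; and $m=1$ gives $F_4 - aF_2 = (c^3-2bc) - ac = c^3 - 2bc - ac$, which I would check equals $c[S_{0,2}] - b[S_{0,1}] = c(c^2-a-b) - bc = c^3 - ac - 2bc$. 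Good.

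Then I would run the induction exactly as in Proposition~\ref{prop4-5}: assuming $(3.5)$ for $m$ and $m+1$, compute
\begin{eqnarray*}
[S_{0,m+4}] &=& c[S_{0,m+3}] - b[S_{0,m+2}] \\
&=& c\bigl(F_{m+4}(b,c) - a F_{m+2}(b,c)\bigr) - b\bigl(F_{m+3}(b,c) - a F_{m+1}(b,c)\bigr) \\
&=& \bigl(c F_{m+4} - b F_{m+3}\bigr) - a\bigl(c F_{m+2} - b F_{m+1}\bigr) \\
&=& F_{m+5}(b,c) - a F_{m+3}(b,c),
\end{eqnarray*}
using the defining recursion $F_{t+2}(b,c) = cF_{t+1}(b,c) - bF_t(b,c)$ twice, and then re-expand via Lemma~\ref{lem4-2} to recover the explicit binomial sums in $(3.5)$ for index $m+2$. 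The recursion $[S_{0,i+1}] = c[S_{0,i}] - b[S_{0,i-1}]$ used here is Lemma~\ref{lem4-4}(6)(the second line), valid for $1<i\le n-2$, which keeps us inside the stated range $0<m<n-2$.

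The only genuine subtlety --- and the step I expect to require the most care --- is not the algebra (which is formally identical to the odd case) but confirming that the coefficient being pulled out really is $a$ and not $a^{n-1}b$ or $ab$. The distinction disappears in $r(H_{2n^2})$ only on the nose when $n=2$ or under the relations, so I would make sure the base case $[S_{0,2}]=c^2-a-b$ is taken from Lemma~\ref{lem4-4}(5) as written (independent of $n$ being $2$), and that the recursion never reintroduces a $b^{n+1}$-type term. Since the recursion in Lemma~\ref{lem4-4}(6) is literally $c[S_{0,i}]-b[S_{0,i-1}]$ with no hidden $a$, the factor $a$ propagates untouched, and the induction goes through. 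I would close by noting that the proof is finished by Lemma~\ref{lem4-2}, exactly as in Proposition~\ref{prop4-5}.
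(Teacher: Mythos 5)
Your proof is correct and follows essentially the same route as the paper, whose own proof of this proposition is literally ``note $[S_{0,2}]=c^{2}-a-b$ and $ca=cb$, then argue as in Proposition \ref{prop4-5}''; your identification of the two sums as $F_{m+3}(b,c)$ and $F_{m+1}(b,c)$ via Lemma \ref{lem4-2}, the verification of the base cases, and the two-step induction using $[S_{0,i+1}]=c[S_{0,i}]-b[S_{0,i-1}]$ are exactly what is intended. One caveat: your throwaway remarks that $ab=[S_{n+2}]=b^{2}$ and that the odd-case factor $b^{n+1}$ is ``exactly $a^{n-1}b$'' are both false (in fact $b^{2}=[S_{2}]$ by Lemma \ref{lem4-4}(2), and for odd $n$ one has $b^{n+1}=[S_{1}]=a$ by Lemma \ref{lem4-3}(1), which is why $(3.4)$ and $(3.5)$ are formally the same identity), but since you explicitly do not rely on these asides the argument is unaffected.
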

\begin{proof}
It is noted that $[S_{0,2}]=c^{2}-a-b$ and $ca=cb$. Now the proof is similar to that of Proposition \ref{prop4-5}.
\end{proof}

As a consequence, we have
\begin{cor} \label{cor4-6} Keeping notations as above. Then
\begin{enumerate}
\item The set $\{b^{k}\mid 0 \leq k \leq 2n-1\}\cup \{c^{i}b^{j} \mid 1 \leq i\leq \frac{n-1}{2}, 0 \leq j \leq n-1\}$ forms a $\mathbb{Z}$-basis of $r(H_{2n^2})$ provided that $n$ is odd.
\item  The set $\{a^{i}b^j\mid 0 \leq i \leq n-1, j=0,1\}\cup \{c^{i}b^{j} \mid 1 \leq i< \frac{n}{2}, 0 \leq j \leq n-1\}\cup \{c^{\frac{n}{2}}b^{j} \mid 0 \leq j < \frac{n}{2}\}$ forms a $\mathbb{Z}$-basis of $r(H_{2n^2})$
    provided that $n$ is even.
\end{enumerate}
\end{cor}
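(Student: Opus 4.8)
\textbf{Proof proposal for Corollary \ref{cor4-6}.}

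The plan is to prove that the proposed sets span $r(H_{2n^2})$ and that they have the right cardinality, which (since $r(H_{2n^2})$ is a free $\mathbb{Z}$-module of known finite rank) forces them to be bases. First I would count: the number of isomorphism classes of simple $H_{2n^2}$-modules is $2n + \frac{n^2-n}{2}$ by Proposition \ref{prop2-4} (equivalently Corollary \ref{cor2-5}), and this is the $\mathbb{Z}$-rank of $r(H_{2n^2})$. For $n$ odd, the candidate set has $2n + \frac{n-1}{2}\cdot n = 2n + \frac{n^2-n}{2}$ elements; for $n$ even, it has $2n + (\frac{n}{2}-1)\cdot n + \frac{n}{2} = 2n + \frac{n^2-n}{2}$ elements. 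So in both cases the cardinality matches the rank, and it suffices to show each candidate set spans $r(H_{2n^2})$ over $\mathbb{Z}$ (a spanning set of size equal to the rank of a free module is automatically a basis).

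Next I would show spanning. Since $r(H_{2n^2})$ is the free abelian group on the classes $[S_m]$ ($m \in \mathbb{Z}_{2n}$) and $[S_{i,j}]$ ($0\le i<j\le n-1$), it is enough to express every such class as a $\mathbb{Z}$-linear combination of the candidate monomials. For the one-dimensional simples: in the odd case, Lemma \ref{lem4-3}(1) shows each $[S_m] = b^i$ for a suitable $i \in \{0,1,\dots,2n-1\}$ (the map $i \mapsto$ the index is a bijection $\mathbb{Z}_{2n}\to\mathbb{Z}_{2n}$), so the $b^k$ already account for all $2n$ one-dimensional classes; in the even case, Lemma \ref{lem4-4}(1)--(3) shows each $[S_m]$ equals some $a^i b^j$ with $0\le i\le n-1$, $j\in\{0,1\}$. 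For the two-dimensional simples $[S_{i,j}]$: using Lemma \ref{lem4-3}(3) (resp.\ Lemma \ref{lem4-4}(6)), namely $[S_{0,i'}]b^j = [S_{j,i'+j}]$ when $i'+j<n$, every $[S_{i,j}]$ with $i>0$ is $b^i[S_{0,j-i}]$, so it remains only to handle the classes $[S_{0,r}]$ for $1\le r\le n-1$. These are governed by the recursion $[S_{0,r+1}] = c[S_{0,r}] - b[S_{0,r-1}]$ together with $[S_{0,1}]=c$ and the value of $[S_{0,2}]$ from Lemma \ref{lem4-3}(2) (resp.\ Lemma \ref{lem4-4}(5)); Propositions \ref{prop4-5} and \ref{prop4-6} give the closed form, expressing $[S_{0,r}]$ as a $\mathbb{Z}$-combination of $b^i c^{r-2i}$-type monomials and $b^{n+1+i}c^{r-2-2i}$ (resp.\ $a b^i c^{r-2-2i}$) monomials. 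Finally I would use Lemma \ref{lem4-3}(4) (resp.\ Lemma \ref{lem4-4}(7)), $[S_{0,i}][S_{0,n-i}] = 1 + b^n + b^i[S_{0,n-2i}]$, to reduce the powers $c^i$ with $i > \frac{n-1}{2}$ (resp.\ $i\ge\frac n2$) down into the claimed range: once $[S_{0,r}]$ for $r\le \frac{n-1}{2}$ (resp.\ $\frac n2$) is expressed in terms of $c^i b^j$ with $i$ small, the relation lets one rewrite $c^r = [S_{0,r}] + (\text{lower in }c)$ for larger $r$ as well, pushing everything into the stated ranges of exponents.

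The main obstacle I expect is the bookkeeping in the last reduction step: showing that after applying the product formula $[S_{0,i}][S_{0,n-i}]=1+b^n+b^i[S_{0,n-2i}]$ repeatedly, one genuinely lands inside the prescribed index ranges ($1\le i\le\frac{n-1}{2}$, $0\le j\le n-1$ in the odd case; the slightly asymmetric ranges $1\le i<\frac n2$ with $0\le j\le n-1$ together with $i=\frac n2$ only for $0\le j<\frac n2$ in the even case) with no overlaps or gaps. This is where the even/odd distinction really bites: when $n$ is even, $c^{n/2}$ is special because $S_{0,n/2}$ decomposes partly into one-dimensionals (the $I_1\cap I_2$ case of Proposition \ref{prop3-6}), which is exactly why only $c^{n/2}b^j$ with $j<\frac n2$ appears. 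I would handle this by an explicit dimension/counting argument: having shown the candidate set spans and having already matched its cardinality to $\operatorname{rank}_{\mathbb{Z}} r(H_{2n^2}) = 2n + \frac{n^2-n}{2}$, linear independence is automatic, so I do not actually need to verify "no overlaps" by hand — the count does it for me. Thus the proof reduces to: (i) the cardinality computation, and (ii) the spanning verification via Lemmas \ref{lem4-3}--\ref{lem4-4} and Propositions \ref{prop4-5}--\ref{prop4-6}, with the clean finish that a size-$\operatorname{rank}$ spanning set of a free $\mathbb{Z}$-module is a basis.
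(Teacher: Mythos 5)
Your overall strategy --- show the candidate set spans, then observe that its cardinality equals the rank $2n+\frac{n^2-n}{2}$ of the free $\mathbb{Z}$-module $r(H_{2n^2})$ so that linear independence is automatic --- is exactly the paper's, and your treatment of the one-dimensional classes, the reduction $[S_{i,j}]=[S_{0,j-i}]b^{i}$, and the use of Propositions \ref{prop4-5} and \ref{prop4-6} all match the paper's proof. The gap is in the one step that actually requires care: expressing $[S_{0,r}]$ for $r>\frac{n-1}{2}$ (resp.\ $r\geq\frac{n}{2}$) inside the span of the allowed monomials. For such $r$ the closed form of Proposition \ref{prop4-5} involves $c^{r}$ with $r$ outside the prescribed range, so a genuine reduction is needed; but the tool you invoke, the product formula $[S_{0,i}][S_{0,n-i}]=1+b^{n}+b^{i}[S_{0,n-2i}]$ of Lemma \ref{lem4-3}(4), cannot do the job: it is an identity about a \emph{product} of two classes, and in a ring you cannot divide by $[S_{0,i}]$ to isolate $[S_{0,n-i}]$. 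Likewise, the remark that $c^{r}=[S_{0,r}]+(\text{lower order in }c)$ runs in the wrong direction --- it expresses the monomial in terms of the module class, whereas spanning requires the module class in terms of the allowed monomials.

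The correct tool is the branch of Lemma \ref{lem4-3}(3) (resp.\ Lemma \ref{lem4-4}(6)) that you did not quote, namely the case $i+j=n$, which gives $[S_{0,n-r}]b^{r}=[S_{0,r}]$. This expresses $[S_{0,r}]$ for $r>\frac{n-1}{2}$ as $b^{r}$ times $[S_{0,n-r}]$ with $n-r\leq\frac{n-1}{2}$, whose closed form only involves $c$-powers at most $n-r$; combined with $cb^{n}=c$ and $b^{2n}=1$ this lands everything in the stated index ranges. (In the even case the same relation at $r=\frac{n}{2}$ gives $[S_{0,n/2}]b^{n/2}=[S_{0,n/2}]$, which is precisely why only $c^{n/2}b^{j}$ with $j<\frac{n}{2}$ is needed.) With this substitution your argument becomes the paper's proof. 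As written, however, the spanning step is not established, and the cardinality count cannot compensate: matching the rank upgrades a spanning set to a basis, but it does not certify spanning in the first place.
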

\begin{proof}
(1) By Lemma \ref{lem4-3}, $b^{2n}=1$ and there is a one to one correspondence between the set $\{b^{i}\mid 0 \leq i \leq 2n-1\}$ and the set of one-dimensional irreducible $H_{2n^2}$ module $\{[S_{i}]\mid 0 \leq i \leq 2n-1\}$. Moreover, for all $0 <i < n$, $[S_{0,i}]b^{n-i}=[S_{0,n-i}]$, hence for $\frac{n-1}{2} <i < n$, $[S_{0,i}]$ can be obtained by $[S_{0,n-i}]b^{i}$. By Proposition \ref{prop4-5}, $[S_{0,i}]$ is a $\mathbb{Z}$-polynomial with $b$ and $c$, and the highest degree of $c$ in this polynomial is just $i$. Furthermore, $cb^n=c$ and $[S_{0,i}]b^{j}=[S_{j,i+j}],i +j<n$. Consequently, all the two-dimensional irreducible $H_{2n^2}$-modules $\{[S_{i,j}]\mid 0 \leq i <j\leq n-1\}$ can be obtained by a $\mathbb{Z}$-linear combination of
$$\left\{c^{i}b^{j} \mid 1 \leq i\leq \frac{n-1}{2}, 0 \leq j \leq n-1\right\}$$
and $\{b^i\mid 0\leq i\leq 2n-1\}$. The result is obtained.

(2) By Lemma \ref{lem4-4}, $a^{n}=1$ and $[S_i]=a^i$ when $0\leq i\leq n-1$, $[S_n]=a^{n-1}b$ and $[S_{n+i+1}]=a^{i}b$, $0\leq i< n-1$. Hence there is a one to one correspondence between the set $\{a^{i}b^j\mid 0 \leq i \leq n-1, j=0,1\}$ and the set of one-dimensional irreducible $H_{2n^2}$ module $\{[S_{i}]\mid 0 \leq i \leq 2n-1\}$. On the other hand, for all $0 <i < n$, $[S_{0,i}]b^{n-i}=[S_{0,n-i}]$, hence for $\frac{n}{2} <i < n$, $[S_{0,i}]$ can be obtained by $[S_{0,n-i}]b^{i}$. By Proposition \ref{prop4-6}, $[S_{0,i}]$ is a $\mathbb{Z}$-polynomial with $a, b$ and $c$, where $a$ appeared in $[S_{0,i}]$ if and only if $i$ is even, and the highest degree of $a$ in $[S_{0,i}]$ is 1, since $cb=ca$, while the highest degree of $c$ in $[S_{0,i}]$ is $i$. Hence $\{[S_{0,i}]\mid 1 \leq i \leq n-1\}$ is a $\mathbb{Z}$-linear combination of $\{c^{i}b^{j} \mid 1 \leq i\leq \frac{n}{2}, 0 \leq j \leq n-1\}$ and $\{ a^ib^{j}\mid i=0,1, 0\leq j\leq n-1\}$.
It is noted that
$$[S_{0,\frac{n}{2}}]b^{\frac{n}{2}}=[S_{0,\frac{n}{2}}], \ [S_{0,i}]b^{j}=[S_{j,i+j}],i +j<n,$$
and  the number of elements in
$$\left\{a^{i}b^j\mid 0 \leq i \leq n-1, j=0,1\}\cup \{c^{i}b^{j} \mid 1 \leq i< \frac{n}{2}, 0 \leq j \leq n-1\}\cup \{c^{\frac{n}{2}}b^{j} \mid 0 \leq j < \frac{n}{2}\right\} $$
is just $2n+\frac{n^2-n}{2}$. Hence we get the result.
\end{proof}
By Corollary \ref{cor4-6}, the ring $r(H_{2n^2})$ is the quotient ring of $\mathbb{Z}\lr{y, z}$ if $n$ is odd,  and $r(H_{2n^2})$ is the quotient of the ring $\mathbb{Z}\lr{x, y, z}$ if $n$ is even.

\begin{thm}\label{thm4-7} Suppose that $n$ is odd and $n\geq 3$, denote $m:=\frac{n-1}{2}$, then the Grothendieck ring $r(H_{2n^2})$ is isomorphic to the quotient ring of the ring $\mathbb{Z}\lr{y,z}$ module the ideal $I$ generated by the following elements
$$y^{2n}-1,\quad zy^{n}-z,$$ and
\begin{eqnarray*}
z^{m+1}-z^{m}y^{m+1}&+&\sum_{i=1}^{\left[\frac{m+1}{2}\right]}(-1)^{i} \left({m+1-i\atop i}\right)y^{i}z^{m+1-2i}\\
&-&\sum_{i=1}^{\left[\frac{m}{2}\right]}(-1)^{i} \left({m-i\atop i}\right)y^{m+1+i}z^{m-2i}\\
&-&y^{n+1}F_m(y,z)+y^{m+n+2}F_{m-1}(y,z).
\end{eqnarray*}
\end{thm}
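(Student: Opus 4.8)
The plan is to show that the assignment $y \mapsto b$, $z \mapsto c$ extends to a well-defined surjective ring homomorphism $\varphi\colon \mathbb Z\langle y,z\rangle \to r(H_{2n^2})$ with kernel exactly $I$. Surjectivity is immediate from Corollary \ref{cor4-6}(1), which says that the monomials $b^k$ ($0\le k\le 2n-1$) together with $c^i b^j$ ($1\le i\le m$, $0\le j\le n-1$) span $r(H_{2n^2})$ over $\mathbb Z$; in particular $r(H_{2n^2})$ is generated as a ring by $b=[S_{n+1}]$ and $c=[S_{0,1}]$. So the content is the identification of the kernel.

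First I would check that $I \subseteq \ker\varphi$, i.e.\ that $b$ and $c$ satisfy the three listed relations. The relation $y^{2n}-1$ holds since $b^{2n}=[S_{2n}]=[S_0]=1$ by Lemma \ref{lem4-3}(1), and $zy^n-z$ holds since $cb^n=c$ by the remark following Lemma \ref{lem4-3}. For the third (degree-$(m+1)$) relation, the point is that $[S_{0,m+1}]b^{m+1}=[S_{0,m+1}]$ (this is the case $i=m+1$, $i+j=n$ of Lemma \ref{lem4-3}(3), using $n=2m+1$), while on the other hand $[S_{0,m+1}]$ is computed by the recursion $[S_{0,i+1}]=c[S_{0,i}]-b[S_{0,i-1}]$ of Lemma \ref{lem4-3}(2), whose closed form is exactly Proposition \ref{prop4-5}: $[S_{0,m+1}]=F_{m+1}(b,c)-b^{n+1}F_{m-1}(b,c)$ and $[S_{0,m+2}]=F_{m+2}(b,c)-b^{n+1}F_m(b,c)$ after expanding $F_{m+1},F_{m+2}$ via Lemma \ref{lem4-2}. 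Writing out $[S_{0,m+1}]b^{m+1}-[S_{0,m+1}]=0$ using these two expressions and the Fibonacci identity $F_{m+2}(b,c)=cF_{m+1}(b,c)-bF_m(b,c)$ produces precisely the stated generator of $I$, so it lies in $\ker\varphi$. This induces a surjection $\bar\varphi\colon \mathbb Z\langle y,z\rangle/I \to r(H_{2n^2})$.

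The main step is the reverse inclusion, and I would do it by a rank count rather than by chasing relations. Using $y^{2n}=1$ and $zy^n=z$ one reduces every element of $\mathbb Z\langle y,z\rangle/I$ (which is commutative once we note $bc=cb$ in $r(H_{2n^2})$, but more carefully: $\mathbb Z\langle y,z\rangle$ is noncommutative, so I should instead work in $\mathbb Z[y,z]/I$ after first observing the image is commutative and that $I$ in the polynomial ring suffices — or simply phrase $\mathbb Z\langle y,z\rangle$ with the understanding, as the paper does, that commutativity is among the relations) to a $\mathbb Z$-combination of $y^k$ ($0\le k\le 2n-1$) and $y^j z^i$ with $0\le j\le n-1$ and $1\le i$; the degree-$(m+1)$ relation then lets one rewrite any $z^i$ with $i\ge m+1$ in terms of lower powers of $z$, so the quotient is spanned over $\mathbb Z$ by $\{y^k\}\cup\{y^j z^i : 1\le i\le m,\ 0\le j\le n-1\}$, a set of size $2n+mn=2n+\tfrac{n^2-n}{2}$, which is exactly $\dim_{\mathbb Z} r(H_{2n^2})$ (the number of simple modules). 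Since $\bar\varphi$ is a surjection of free $\mathbb Z$-modules carrying a spanning set of cardinality equal to the rank of the target onto a basis of the target, $\bar\varphi$ must be an isomorphism.

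The hard part will be the bookkeeping in the second paragraph: verifying that the explicit alternating-binomial generator of $I$ is really $[S_{0,m+1}](b^{m+1}-1)$ expressed in the $b,c$ basis, i.e.\ matching the $F_t$-closed forms from Proposition \ref{prop4-5} and Lemma \ref{lem4-2} against the printed sums index-by-index, and making sure the $i=0$ terms ($z^{m+1}$ and $z^m y^{m+1}$, plus the $-y^{n+1}F_m(y,z)$ and $+y^{m+n+2}F_{m-1}(y,z)$ tails) are separated off correctly. I expect no conceptual obstacle there, only care with the ranges of summation and the identity $\binom{m+1-i}{i}$ versus $\binom{m-i}{i}$. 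A secondary technical point to state cleanly is why the relations $y^{2n}-1$ and $zy^n-z$, together with the one higher relation, already force the quotient down to the claimed spanning set — this is where one uses that $[S_{0,i}]b^j=[S_{j,i+j}]$ or $[S_{0,j}]$ keeps everything inside $\{c^i b^j\}$ with $i\le m$, mirrored on the polynomial side by repeated application of the degree-$(m+1)$ relation to kill $z^{m+1}$.
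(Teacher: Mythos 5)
Your overall strategy coincides with the paper's: map $y\mapsto b$, $z\mapsto c$, verify that the three listed generators die in $r(H_{2n^2})$, and then get injectivity by comparing the $\mathbb{Z}$-rank $2n+\tfrac{n^2-n}{2}$ of $r(H_{2n^2})$ with a spanning set of the quotient. (The paper packages this last step as an explicit $\mathbb{Z}$-module splitting $\Psi$ with $\Psi\overline{\Phi}=\mathrm{id}$, which is equivalent to your cardinality count; also, $\mathbb{Z}\lr{y,z}$ in the paper denotes the commutative polynomial ring, so your worry about noncommutativity is moot.)

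There is, however, a genuine error in your verification that the third generator lies in the kernel. You derive it from the identity $[S_{0,m+1}]b^{m+1}=[S_{0,m+1}]$, citing the case $i+j=n$ of Lemma \ref{lem4-3}(3) with $i=m+1$; but $i=m+1$ and $i+j=n=2m+1$ force $j=m$, not $j=m+1$, and in fact $[S_{0,m+1}]b^{m+1}=[S_{0,m}]b=[S_{1,m+1}]\neq[S_{0,m+1}]$, so the relation you propose to encode is false in $r(H_{2n^2})$ (and $(b^{m+1}-1)[S_{0,m+1}]$ would not be the printed polynomial in any case). The identity actually needed is $[S_{0,m}]b^{m+1}=[S_{0,m+1}]$, i.e.\ Lemma \ref{lem4-3}(3) with $i=m$, $j=m+1$. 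Your closed forms are also off by one: the correct ones are $[S_{0,m}]=F_{m+1}(b,c)-b^{n+1}F_{m-1}(b,c)$ and $[S_{0,m+1}]=F_{m+2}(b,c)-b^{n+1}F_{m}(b,c)$. Combining these gives
$$F_{m+2}(b,c)-b^{n+1}F_{m}(b,c)-b^{m+1}F_{m+1}(b,c)+b^{m+n+2}F_{m-1}(b,c)=0,$$
which, after expanding $F_{m+2}$ and $F_{m+1}$ by Lemma \ref{lem4-2} and splitting off their $i=0$ terms $z^{m+1}$ and $z^{m}y^{m+1}$, is exactly the printed generator. With this correction the remainder of your argument --- the reduction of $z$-degree below $m+1$ using that the generator is monic of degree $m+1$ in $z$, and the count $2n+mn=\operatorname{rank}_{\mathbb{Z}}r(H_{2n^2})$ forcing the surjection to be an isomorphism --- goes through and matches the paper.
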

\begin{proof}
By Corollary \ref{cor4-6}, when $n$ is odd, the ring $r(H_{2n^2})$ is generated by $b $ and $c$. Hence there is a unique ring epimorphism
$$\Phi: \mathbb{Z}\lr{ y, z}\rightarrow r(H_{2n^2})$$
from $\mathbb{Z}\lr{ y, z}$ to $r(H_{2n^2})$, such that
$$\Phi(y)=b=[S_{n+1}],\quad \Phi(z)=c=[S_{0,1}].$$
Since
$$b^{2n}=1,\quad cb^{n}=c,$$
By Lemma \ref{lem4-3}, we have
$$\Phi (y^{2n}-1)=0,\quad\Phi(zy^{n}-z)=0.$$ Note that by Lemma \ref{lem4-3} and proposition \ref{prop4-5}, $[S_{0,m+1}]=F_{m+2}(b,c)-b^{n+1}F_{m}(b,c)$,  and $[S_{0,m+1}]=[S_{0,m}]b^{m+1},$ thus we have
$$F_{m+2}(b,c)-b^{n+1}F_{m}(b,c)=b^{m+1}F_{m+1}(b,c)-b^{m+n+2}F_{m-1}(b,c),$$
i.e.,
\begin{eqnarray*}
c^{m+1}-c^{m}b^{m+1}&+&\sum_{i=1}^{\left[\frac{m+1}{2}\right]}(-1)^{i} \left({m+1-i\atop i}\right)b^{i}c^{m+1-2i}\\
&-&\sum_{i=1}^{\left[\frac{m}{2}\right]}(-1)^{i} \left({m-i\atop i}\right)b^{m+1+i}c^{m-2i}\\
&-&b^{n+1}F_m(b,c)+b^{m+n+2}F_{m-1}(b,c)=0.
\end{eqnarray*}
therefore $ \Phi$ maps the element
\begin{eqnarray*}
z^{m+1}-z^{m}y^{m+1}&+&\sum_{i=1}^{\left[\frac{m+1}{2}\right]}(-1)^{i} \left({m+1-i\atop i}\right)y^{i}z^{m+1-2i}\\
&-&\sum_{i=1}^{\left[\frac{m}{2}\right]}(-1)^{i} \left({m-i\atop i}\right)y^{m+1+i}z^{m-2i}\\
&-&y^{n+1}F_m(y,z)+y^{m+n+2}F_{m-1}(y,z)
\end{eqnarray*}
to 0.
It follows that $\Phi (I)=0,$ and $\Phi$ induces a ring epimorphism
$$\overline{\Phi}: \mathbb{Z}\lr{y,z}/I\rightarrow r(H_{2n^2}),$$
such that $\overline{\Phi}(\overline{v})=\Phi (v)$ for all $v\in \mathbb{Z}\lr{y, z}$, where $\overline{v}=\pi(v)$ and $\pi$ is the natural epimorphism $\mathbb{Z}\lr{y, z}\rightarrow \mathbb{Z}\lr{y, z}/I$.

Note that the ring $r(H_{2n^2})$ is the free $\mathbb{Z}$-module of rank $2n+\frac{n(n-1)}{2}$, with the $\mathbb{Z}$-basis $$\{c^{i}b^{j} \mid 1 \leq i\leq \frac{n-1}{2}, 1 \leq j \leq n-1\}\cup \{b^{k}\mid 0 \leq k \leq 2n-1\},$$
we can define a $\mathbb{Z}$-module homomorphism
$$\Psi: r(H_{2n^2})\rightarrow \mathbb{Z}\lr{y, z}/I,$$
$$c^{i}b^{j}\rightarrow\overline{z}^{i}\overline{y}^{j},~~ b^{k}\rightarrow\overline{y}^{k},$$
where $\quad 1 \leq i\leq \frac{n-1}{2}, 0 \leq j \leq n-1, 0 \leq k \leq 2n-1.$

On the other hand, as a free $\mathbb{Z}$-module, $\mathbb{Z}\lr{y,z}/I$ is generated by elements $\overline{z}^{i}\overline{y}^{j}$ and $\overline{y}^{k}, 1 \leq i\leq \frac{n-1}{2}, 0 \leq j \leq n-1, 0 \leq k \leq 2n-1,$ We have
$$\Psi\overline{\Phi}(\overline{z^{i}y^{j}})=\Psi\Phi(z^{i}y^{j})=\Psi(c^{i}b^{j})=
\overline{z}^{i}\overline{y}^{j}, $$
$$\Psi\overline{\Phi}(\overline{y^{k}})=\Psi\Phi(y^{k})=\Psi(b^{k})=
\overline{y}^{k},$$
for all $1 \leq i\leq \frac{n-1}{2}, 0 \leq j \leq n-1, 0 \leq k \leq 2n-1.$ Hence $\Psi\overline{\Phi}=id$, and $\overline{\Phi}$ is injective. Thus, $\overline{\Phi}$ is a ring isomorphism.
\end{proof}
Let $\dlr{f_1, \cdots, f_m}$ denote the ideal generated by polynomials $f_1, \cdots, f_m$ in some $\mathbb{Z}$-polynomial ring.
\begin{exam} The following three examples can be obtained easily from Theorem \ref{thm4-7}.
\begin{itemize}
  \item $r(H_{2\cdot 3^2})\cong \mathbb{Z}\lr{y, z}/\dlr{ y^6-1,zy^3-z,z^2-zy^2-y^4-y},$
  \item $r(H_{2\cdot 5^2})\cong \mathbb{Z}\lr{y, z}/\dlr{y^{10}-1,zy^5-z,z^3-z^2y^3-3zy+y^4+y^9},$
  \item $r(H_{2\cdot 7^2})\cong \mathbb{Z}\lr{y, z}/\dlr{ y^{14}-1,zy^7-z,z^4-z^3y^4+3zy^5-4z^2y+y^9+y^2}.$
\end{itemize}
\end{exam}
 Then we have the following
\begin{thm}\label{thm4-8} Suppose that $n$ is even.
\begin{itemize}
  \item[(a)] if $n=2$, then
  $$r(H_{8})\cong \mathbb{Z}\lr{x,y,z}/\dlr{y^2-1, x^2-y^{2}, zx-zy, z-zy,  z^2-x-y-xy-1};$$
  \item[(b)] if $n>2$ and we denote $m:=\frac{n}{2}$, then
  $r(H_{2n^2})$ is isomorphic to the quotient ring of the ring $\mathbb{Z}\lr{x,y,z}$ module the ideal $I$ generated by the following elements
$$x^{n}-1,\quad x^2-y^{2},\quad zx-zy$$ and
\begin{eqnarray*}
z^{m}-z^{m}y^{m}&+&\sum_{i=1}^{\left[\frac{m}{2}\right]}(-1)^{i} \left({m-i\atop i}\right)y^{i}z^{m-2i}\\
&-&\sum_{i=1}^{\left[\frac{m}{2}\right]}(-1)^{i} \left({m-i\atop i}\right)y^{m+i}z^{m-2i}\\
&-&xF_{m-1}(y,z)+xy^{m}F_{m-1}(y,z),
\end{eqnarray*}
\begin{eqnarray*}
z^{m+1}-z^{m-1}y^{m+1}&+&\sum_{i=1}^{\left[\frac{m+1}{2}\right]}(-1)^{i} \left({m+1-i\atop i}\right)y^{i}z^{m+1-2i}\\
&-&\sum_{i=1}^{\left[\frac{m-1}{2}\right]}(-1)^{i} \left({m-1-i\atop i}\right)y^{m+1+i}z^{m-1-2i}\\
&-&xF_{m}(y,z)+xy^{m+1}F_{m-2}(y,z).
\end{eqnarray*}
\end{itemize}
\end{thm}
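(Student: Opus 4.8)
The plan is to mimic the proof of Theorem \ref{thm4-7}, carrying out the same two-way-map argument but now with three generators $a=[S_1]$, $b=[S_{n+1}]$, $c=[S_{0,1}]$ and with the extra subtleties coming from the even case recorded in Lemma \ref{lem4-4}, Proposition \ref{prop4-6} and the second part of Corollary \ref{cor4-6}. For part (a), the case $n=2$, the computation is essentially finite: one lists the $8$ simple modules of $H_8$ ($S_0,\dots,S_3$ one-dimensional and $S_{0,1}$ two-dimensional), uses Lemma \ref{lem4-4}(1),(2),(3),(8) to check that $a^2=1=y^2$ forces $x^2=y^2$, that $ca=cb$ gives $zx=zy$, that $z=zy$ follows from $cb^{n-1}=cb$ when $n=2$ (i.e. $c\cdot b = c$), and that $[S_{0,1}]^2=1+a+b+ab$ is exactly the relation $z^2-x-y-xy-1$. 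One then checks by rank-counting (the Grothendieck ring has rank $2n+\frac{n^2-n}{2}=4+1=5$) that these relations already cut $\mathbb Z\lr{x,y,z}$ down to a free $\mathbb Z$-module of rank $5$, so the obvious epimorphism is an isomorphism.

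For part (b) with $n>2$, first I would define the ring epimorphism $\Phi:\mathbb Z\lr{x,y,z}\to r(H_{2n^2})$ by $\Phi(x)=a$, $\Phi(y)=b$, $\Phi(z)=c$; it is surjective by Corollary \ref{cor4-6}(2). Next I would verify $\Phi(I)=0$ for the listed generators of $I$: the relation $x^n-1$ is Lemma \ref{lem4-4}(1) ($a^n=1$); the relation $x^2-y^2$ follows since $a^2=[S_2]=b^2$ by Lemma \ref{lem4-4}(1),(2); the relation $zx-zy$ is Lemma \ref{lem4-4}(4) ($ca=cb$). For the two long polynomial relations, the point is that by Proposition \ref{prop4-6} one has $[S_{0,t+2}]=F_{t+2}(b,c)-aF_t(b,c)$ (using $ca=cb$ to absorb the $a$), while Lemma \ref{lem4-4}(6) gives $[S_{0,m+1}]=[S_{0,m}]b^{m+1}$ and $[S_{0,m+2}]=[S_{0,m}]b^{m+2}$ when $n=2m$; equating these two expressions (and expanding $F$ with Lemma \ref{lem4-2}) produces exactly the two displayed elements of $I$ — one relation from $[S_{0,m}]b^{m+1}$ and one from $[S_{0,m}]b^{m+2}$, which is why there are two polynomial relations here rather than one. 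Hence $\Phi$ factors through $\overline\Phi:\mathbb Z\lr{x,y,z}/I\to r(H_{2n^2})$.

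It then remains to show $\overline\Phi$ is injective, and for this I would, exactly as in Theorem \ref{thm4-7}, build a $\mathbb Z$-module splitting $\Psi:r(H_{2n^2})\to\mathbb Z\lr{x,y,z}/I$ on the $\mathbb Z$-basis from Corollary \ref{cor4-6}(2), namely $a^ib^j\mapsto \bar x^i\bar y^j$ ($0\le i\le n-1$, $j=0,1$), $c^ib^j\mapsto \bar z^i\bar y^j$ ($1\le i<\tfrac n2$, $0\le j\le n-1$), and $c^{n/2}b^j\mapsto \bar z^{n/2}\bar y^j$ ($0\le j<\tfrac n2$), and check $\Psi\overline\Phi=\mathrm{id}$ on this generating set of $\mathbb Z\lr{x,y,z}/I$. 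The main obstacle — and the step that needs genuine care rather than bookkeeping — is verifying that these monomials actually span $\mathbb Z\lr{x,y,z}/I$ as a $\mathbb Z$-module, i.e. that the relations in $I$ suffice to rewrite every monomial $\bar x^i\bar y^j\bar z^k$ in terms of the listed basis; here one must use $x^n=1$, $x^2=y^2$ and $zx=zy$ to eliminate all but the first power of $x$ (and $x^0,x^1$ only when $z$ is absent), use the first long relation to cap the $z$-degree at $n/2$ and the relation coming from $[S_{0,m}]b^{m+2}$ together with $zx=zy$ to control $c^{n/2}b^j$ for $j\ge n/2$, and finally match the count $2n+\frac{n^2-n}{2}$. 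Once spanning is established, $\Psi$ is well-defined, $\Psi\overline\Phi=\mathrm{id}$ forces $\overline\Phi$ injective, and combined with surjectivity we conclude $\overline\Phi$ is a ring isomorphism.
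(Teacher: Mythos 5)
Your overall architecture is the same as the paper's (surjection $\Phi:\mathbb{Z}\lr{x,y,z}\to r(H_{2n^2})$, verification that the listed generators of $I$ die under $\Phi$ via Lemma \ref{lem4-4} and Proposition \ref{prop4-6}, then a $\mathbb{Z}$-module splitting $\Psi$ built on the basis of Corollary \ref{cor4-6}(2)), and your treatment of part (a) and of the three short relations in part (b) is correct. But there is a concrete error in the step where you derive the two long polynomial relations. You claim they come from the identities $[S_{0,m+1}]=[S_{0,m}]b^{m+1}$ and $[S_{0,m+2}]=[S_{0,m}]b^{m+2}$, attributed to Lemma \ref{lem4-4}(6). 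That lemma only gives $[S_{0,i}]b^{j}=[S_{0,j}]$ when $i+j=n$; with $i=m$ and $j=m+1$ one has $i+j=n+1$, and in fact $[S_{0,m}]b^{m+1}=([S_{0,m}]b^{m})b=[S_{0,m}]b=[S_{1,m+1}]$, which is not $[S_{0,m+1}]$. Likewise $[S_{0,m}]b^{m+2}=[S_{2,m+2}]\neq[S_{0,m+2}]$. The identities that actually produce the displayed generators are $[S_{0,m}]=[S_{0,m}]b^{m}$ (take $i=j=m$, so $i+j=n$) and $[S_{0,m+1}]=[S_{0,m-1}]b^{m+1}$ (take $i=m-1$, $j=m+1$). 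You can see this directly from the shape of the two polynomials: the first is $(1-y^{m})\bigl(F_{m+1}(y,z)-xF_{m-1}(y,z)\bigr)$ and the second is $\bigl(F_{m+2}(y,z)-xF_{m}(y,z)\bigr)-y^{m+1}\bigl(F_{m}(y,z)-xF_{m-2}(y,z)\bigr)$, whose top $z$-degrees are $m$ and $m+1$; an identity involving $[S_{0,m+2}]$ would force a term of degree $m+2$, so your claimed source cannot yield the stated relation and the verification $\Phi(I)=0$ would fail as written.

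A related but smaller slip: the closed form coming from Proposition \ref{prop4-6} is $[S_{0,t}]=F_{t+1}(b,c)-aF_{t-1}(b,c)$ (check $t=2$: $F_{3}-aF_{1}=c^{2}-b-a=[S_{0,2}]$), not $[S_{0,t+2}]=F_{t+2}(b,c)-aF_{t}(b,c)$ as you wrote; with your indexing the expansion of the relations via Lemma \ref{lem4-2} would again come out wrong. Once these two points are corrected, the remainder of your argument --- in particular the spanning/rank-counting step for $\mathbb{Z}\lr{x,y,z}/I$ and the verification $\Psi\overline{\Phi}=\mathrm{id}$ --- goes through exactly as in Theorem \ref{thm4-7}, which is also how the paper concludes.
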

\begin{proof}
Let
$$\Phi: \mathbb{Z}\lr{x, y, z}\rightarrow r(H_{2n^2})$$
be the ring epimorphism from $\mathbb{Z}\lr{x, y, z}$ to $r(H_{2n^2})$ such that
$$\Phi(x)=a=[S_{1}],\quad \Phi(y)=b=[S_{n+1}], \quad\Phi(z)=c=[S_{0,1}].$$
By Lemma \ref{lem4-4}, we have
$$a^{n}=1=b^n,\quad b^{2}=a^2, \quad ca=cb.$$
and when $n=2$, we have $c=cb$, and $c^2=1+ab+a+b$; when $n>2$, we have $[S_{0,m+1}]=[S_{0,m-1}]b^{m+1},$ and $[S_{0,m}]=[S_{0,m}]b^{m}$. By Proposition \ref{prop4-6} and Corollary \ref{cor4-6}, the result can be shown
as that of Theorem \ref{thm4-7}.
\end{proof}

\begin{exam}
We have the following examples.
\begin{itemize}
               \item $r(H_{2\cdot 4^2})\cong \mathbb{Z}\lr{x,y, z}/\dlr{y^4-1,x^2-y^2,zx-zy, z^2-z^2y^2-y+y^3-x+xy^2, z^3-zy^3-3yz};$
               \item $r(H_{2\cdot 6^2})\cong \mathbb{Z}\lr{x,y,z}/\dlr{y^{6}-1,x^2-y^2,zx-zy, z^3-z^3y^3-3yz+3y^4z, z^4-z^2y^4-4yz^2+y^2+y^5+xy+xy^4};$
               \item $r(H_{2\cdot 8^2})\cong \mathbb{Z}\lr{x, y, z}/\dlr{y^{8}-1,x^2-y^2,zx-zy, z^4-z^4y^4-4z^2y+4z^2y^5-xy^5-y^6+xy+y^2,
                   z^5-z^3y^5-5z^3y+5zy^2+3zy^6}.$
\end{itemize}
\end{exam}
\begin{remark}
The ring $r(H_{8})$ was considered earlier in \cite{SY}. It is the same as Theorem \ref{thm4-8}(a).
\end{remark}

\section{Conclusion}
We have described the Grothendieck ring of a class of 2$n^2$-dimension semisimple Hopf Algebras $H_{2n^2}$ by generators and relations explicitly. For the Grothendieck ring, it is interesting to determine its automorphism group and its Casimir number. This will be studied in our future works.

\section*{Acknowledgements}
The work is supported by National Natural
Science Foundation of China (Grant Nos. 11701019, 11671024 and 11471186 ) and
the Beijing Natural Science Foundation (Grant No. 1162002)

\end{document}